	\theoremstyle{plain}
	\newtheorem{theorem}{Theorem}[section]
	\newtheorem{lemma}[theorem]{Lemma}
	\newtheorem{corol}[theorem]{Corollary}
	\theoremstyle{definition}
	\newtheorem{remark}[theorem]{Remark}
	\newtheorem{example}[theorem]{Example}
	\newcommand{\N}{{{\mathbb N}}}
	\newcommand{\cC}{{\mathcal C}}
\begin{document}

	\title{\textbf{Positive solutions for second order boundary value problems with sign changing Green's functions}}
		\date{}
		\author{Alberto Cabada$^1$\footnote{Partially supported by Ministerio de Econom\'{\i}a y Competitividad, Spain, and FEDER, project MTM2013-43014-P, and by the Agencia Estatal de Investigaci\'on (AEI) of Spain under grant MTM2016-75140-P, co-financed by the European Community fund FEDER.}, Ricardo Engui\c{c}a$^2$\footnote{Partially supported by Fundação para a Ci\^{e}ncia e a Tecnologia, Portugal, UID/MAT/04561/2013} and Luc\'{\i}a L\'opez-Somoza$^1$\footnote{
				Partially supported by the Agencia Estatal de Investigaci\'on (AEI) of Spain under grant MTM2016-75140-P, co-financed by the European Community fund FEDER.}\\
				$^1$ Instituto de Matem\'aticas, Facultade de Matem\'aticas,\\ Universidade de Santiago de Compostela, 15782,\\  Santiago de Compostela, Galicia, Spain\\ 	e-mail: alberto.cabada@usc.es, \; lucia.lopez.somoza@usc.es\\
	$^2$  Departamento de Matem\'atica, Instituto Polit\'ecnico de Lisboa,\\ Lisboa, Portugal \\
e-mail: rroque@adm.isel.pt  }
		\maketitle

	\begin{abstract}
	 In this paper we analyse some possibilities of finding positive solutions for second order boundary value problems with Dirichlet and periodic boundary conditions, for which the correspondent Green's functions change sign. The obtained results can also be adapted to the Neumann and Mixed boundary conditions.
		
		\vspace{.5cm}
		\noindent \textbf{Key words:} second order differential equations; Dirichlet boundary conditions, periodic boundary conditions, change sign Green's function. \\

\noindent
\textbf{AMS Subject Classification:} 34B15, 34A40.		\\
				
	\end{abstract}
	
	\section{Introduction}
	
	In the literature it has been widely studied the existence of positive solutions for boundary value problems (BVP), namely second order BVP with Periodic and Dirichlet boundary conditions. A standard technique consists on obtaining the existence of positive solutions  through Krasnoselskii's fixed point theorem on cones, or to use fixed point index theory. In these cases, the positivity of the associated Green's functions is usually fundamental to prove such results. In this paper we are able to prove existence of solutions for several problems where the associated Green's function changes sign.
	
	Hill's operator properties have been described in several papers, where existence and multiplicity results, comparison principles, Green's functions and spectral analysis were studied. Some of these results can be originally found in \cite{cacidAAA, cacid, cacilu,  torres, zhang}.
	
	Positivity results for BVP where the Green's function can vanish are treated for example in \cite{gkw1, webb}. In \cite{gkw1}, Graef, Kong and Wang studied the periodic BVP (with $T=1$ in the paper)
	\begin{equation*}\begin{split}
	&u''(t)+a(t)\,u(t)=g(t)\,f(u(t)), \quad t\in(0,T),\\
	&u(0)=u(T), \ u'(0)=u'(T),
	\end{split}	\end{equation*}
	with $f$ and $g$ nonnegative continuous functions and $g$ satisfying the condition $\min_{t\in[0,1]}g(t)>0$. They assumed the Green's function to be nonnegative and to satisfy the following condition
	\begin{equation}\label{int-graef}
	\min_{0\le s\le T} \int_0^T G(t,s) \, dt>0.
	\end{equation}
	In \cite{webb}, Webb considered weaker assumptions to prove the existence of positive solutions of the previous problem, but he still assumed the Green's function to be nonnegative. Despite our results do not require the Green's function to be nonnegative, they could be applied to this particular case, obtaining positive solutions assuming an integral condition weaker than \eqref{int-graef} (see Remarks \ref{R-int-Graef} and \ref{R-int-Graef2} in Section 3).
	
	On the other hand, some existence results for BVP with sign-changing Green's function have been considered in \cite{cabinftoj,infpietoj}, where the authors asked for the existence of a subinterval $[c,d]\subset [0,T]$, a function $\phi\in L^1([0,T])$ and a constant $c\in(0,1]$ such that the Green's function $G$ satisfies the following condition:
	\begin{equation}\label{G-banda}\begin{split}
		&|G(t,s)|\le \phi(s) \text{ for all } t\in[0,T] \text{ and almost every } s\in[0,T],\\
	&G(t,s)\ge c\, \phi(s) \text{ for all } t\in[c,d] \text{ and almost every } s\in[0,T].
	\end{split}\end{equation}

	It must be pointed out that, if we consider a periodic problem with constant potential $a(t)=\rho^2$ for which the related Green's function changes its sign  (i.e. $\rho > \pi/T$, $\rho \neq 2 k \pi/T$, $k=1,2, \ldots$), condition \eqref{G-banda} is never  fulfilled for any strictly positive function $\phi$. This is due to the fact that in such situation the Green's function is constant along the straight lines of slope equals to one (see \cite{cabada1, cabada2} for details). Meanwhile, as we will prove on Section \ref{sect_ex_per}, our results can be applied without further complications for this  case. 
	
	Moreover, for Dirichlet BVP with constant potential $a(t)=\rho^2$ with sign change Green's function (i.e. $\rho > \pi/T$, $\rho \neq k \pi/T$, $k=1,2, \ldots$), as a direct consequence of expression \eqref{e-G-Dir}, it is immediate to verify that condition \eqref{G-banda} holds if and only if $\rho^2$ lies between the first and the second eigenvalues of the problem ($\frac{\pi}{T}<\rho<\frac{2\,\pi}{T}$) but it is never satisfied for $\rho>\frac{2\,\pi}{T}$. However, as we will point out in Section \ref{sect-Dir-constant}, our results can be applied for any nonresonant value of $\rho > \pi/T$. Despite this, we must note that the imposed restrictions increase with $\rho$.
	
	Furthermore, in  \cite{cabinftoj,infpietoj} the authors proved the existence of solutions in the cone
	\[K_0=\left\{u\in\mathcal{C}[0,T], \ \min_{t\in[c,d]} u(t)\ge c \|u\|\right\},\]
	that is, they ensured the positivity of the solutions on the subinterval $[c,d]$ but such solutions were allowed to change sign when considering the whole interval $[0,T]$. 
	
	As far as we know, positive solutions for BVP with sign-changing Green's function can be tracked only as back as 2011 in the papers \cite{ma, ZA}. In the first of these papers, R. Ma considers the following one parameter family of problems,
		\begin{equation}		\label{e-per-lambda}
	u''(t)+a(t)\,u(t)=\lambda\,g(t)\,f(u(t)), \; t\in(0,T),\quad
	u(0)=u(T), \ u'(0)=u'(T).
\end{equation}
By using the Schauder's fixed point Theorem, the author obtains the existence of a positive solution for sufficiently small values of $\lambda$. These existence results are not comparable with the ones we will obtain in this paper.
	On the second paper \cite{ZA}, S. Zhong and Y. An study the following autonomous periodic BVP, with constant potential $\rho \in (0, \frac{3 \,\pi}{2\, T}]$.
	\begin{equation}
	\label{e-per-Zhong}
	u''+\rho^2 u=f(u),\, t\in(0,T),\quad u(0)=u(T), \; u'(0)=u'(T).
	\end{equation}
In this case, it is very well known that the related Green's function $G_P(t,s) \ge 0$ for all $\rho \in (0, \frac{\pi}{T}]$ and it changes sign for $\rho \in (\frac{\pi}{T}, \frac{3 \,\pi}{2\, T}]$ (see \cite{cabada1, cabada2}).
With this, it can be defined the constant
	\begin{equation*}
	\delta=
	\left\{
	\begin{array}{lll}
	\infty &\mbox{if} & \rho \in (0, \frac{\pi}{T}],\\
	 \inf_{t\in I} \frac{\int_{0}^{T} G^+_P(t,s)\,ds}{\int_{0}^{T} G^-_P(t,s)\,\,ds}&\mbox{if} & \rho \in (\frac{\pi}{T}, \frac{3 \,\pi}{2\, T}]
	 \end{array}
	 \right.
	\end{equation*}
and using the Krasnoselskii's fixed point Theorem, the authors prove the following existence result:

\begin{theorem}\cite[Theorem 3]{ZA}
\label{t-ZA}
Suppose that the following assumptions are fulfilled:
\begin{enumerate}
\item[$(J1)$]  $f:[0, \infty) \to [0, \infty) $ is continuous.
\item[$(J2)$]   $0 \le m = \inf_{u \ge 0}{\{f(u)\}}$ and $M= \sup_{u \ge 0}{\{f(u)\}} \le M \le \infty$.
\item[$(J3)$]  $M/m \le \delta$, with $M/m = \infty$ when $m=0$.
\end{enumerate}
Moreover, if $\delta=\infty$ assume that
\[
 \lim_{x\to \infty} \,  \frac{f(x)}{x}< \rho^2 < \lim_{x\to 0^+} \frac{f(x)}{x}.
\]
Then problem \eqref{e-per-Zhong} has a positive solution on $[0,T]$.
\end{theorem}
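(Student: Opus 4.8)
The plan is to realize the solutions of \eqref{e-per-Zhong} as fixed points of the completely continuous operator $\mathcal N:\mathcal C[0,T]\to\mathcal C[0,T]$ given by $\mathcal Nu(t)=\int_0^T G_P(t,s)\,f(u(s))\,ds$, where $G_P$ is the Green's function of $u''+\rho^2u=0$ under periodic conditions; this exists and is continuous for every $\rho\in(0,\tfrac{3\pi}{2T}]$ since $\rho\neq\tfrac{2k\pi}{T}$, and complete continuity of $\mathcal N$ follows from $(J1)$ via Arzel\`a--Ascoli. Two elementary facts about $G_P$ will be used repeatedly. First, $\int_0^T G_P(t,s)\,ds=1/\rho^2$ for every $t$, because the constant $1/\rho^2$ is the unique periodic solution of $u''+\rho^2u=1$. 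Second, since $G_P$ is constant along the lines of slope one it depends on $t$ and $s$ only through $(t-s)\bmod T$, so the functions $t\mapsto\int_0^T G_P^+(t,s)\,ds$ and $t\mapsto\int_0^T G_P^-(t,s)\,ds$ are in fact constant; writing $A$ and $B$ for these constants one has $A-B=1/\rho^2$ and, when $\rho>\tfrac{\pi}{T}$, $\delta=A/B$. The argument then splits according to whether $\delta<\infty$ (equivalently $\rho\in(\tfrac{\pi}{T},\tfrac{3\pi}{2T}]$, so $G_P$ changes sign, and by $(J2)$--$(J3)$ necessarily $0<m$ and $M<\infty$) or $\delta=\infty$ (so $\rho\le\tfrac{\pi}{T}$ and $G_P\ge0$).

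\emph{Case $\delta<\infty$.} I would work in the full positive cone $K=\{u\in\mathcal C[0,T]:u\ge0\}$. For $u\in K$ one has $m\le f(u(s))\le M$, hence
\[
\mathcal Nu(t)=\int_0^T G_P^+(t,s)f(u(s))\,ds-\int_0^T G_P^-(t,s)f(u(s))\,ds\ge mA-MB\ge0,
\]
the last inequality being exactly $M/m\le A/B=\delta$; thus $\mathcal N(K)\subseteq K$. Then I would run a Krasnoselskii compression--expansion argument on two spheres of $K$. For the inner sphere, continuity of $f$ together with $f(0)\ge m>0$ gives $\mathcal Nu(t)\to f(0)/\rho^2>0$ uniformly as $\|u\|\to0$, so $\|\mathcal Nu\|>\|u\|$ when $\|u\|=r$ for $r$ small. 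For the outer sphere, $f\le M$ yields $\|\mathcal Nu\|\le M\int_0^T|G_P(t,s)|\,ds=M(A+B)$ for every $u\in K$, so $\|\mathcal Nu\|<\|u\|$ when $\|u\|=R$ for $R$ large. Krasnoselskii's fixed point theorem then provides $u\in K$ with $0<r\le\|u\|\le R$, i.e.\ a nontrivial nonnegative solution of \eqref{e-per-Zhong}; moreover $\mathcal Nu(t)\ge mA-MB$, which is strictly positive as soon as $M/m<\delta$.

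\emph{Case $\delta=\infty$.} Here $G_P\ge0$ and, instead of $(J3)$, one has $\lim_{x\to\infty}f(x)/x<\rho^2<\lim_{x\to0^+}f(x)/x$. I would use the cone $K=\{u\ge0:\min_{[0,T]}u\ge\gamma\|u\|\}$, where $\gamma\in(0,1]$ is the Harnack-type constant of $G_P$ (for $\rho<\tfrac{\pi}{T}$; the endpoint $\rho=\tfrac{\pi}{T}$, where $G_P$ vanishes on the diagonal, is dealt with by a modified cone or a limiting argument). Positivity of $G_P$ together with $G_P(t,s)\ge\gamma G_P(\tau,s)$ gives $\mathcal N(K)\subseteq K$. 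The outer sphere is routine: $\lim_{x\to\infty}f(x)/x<\rho^2$ provides $\varepsilon>0$ and $C\ge0$ with $f(x)\le(\rho^2-\varepsilon)x+C$ for all $x\ge0$, whence $\mathcal Nu\le(\rho^2-\varepsilon)Lu+(C/\rho^2)\mathbf 1$ with $Lu:=\int_0^T G_P(\cdot,s)u(s)\,ds$; as the positive operator $L$ has spectral radius $1/\rho^2$ (with eigenfunction $\mathbf 1$), the operator $I-(\rho^2-\varepsilon)L$ is invertible with positive inverse, which bounds $\|u\|$ a priori on the set of fixed points and yields the compression estimate on $\|u\|=R$ for $R$ large. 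For the inner sphere I would use $\lim_{x\to0^+}f(x)/x>\rho^2$ to obtain $f(x)\ge(\rho^2+\varepsilon)x$ on $[0,r]$, so $\mathcal Nu\ge(\rho^2+\varepsilon)Lu$ on $\{u\in K:\|u\|\le r\}$, and then invoke the comparison with the principal eigenvalue of $L$ (equivalently, the cone fixed-point index on the small ball equals $0$). Krasnoselskii's theorem then gives a fixed point $u\in K$ with $r\le\|u\|\le R$; since $u\ge\gamma\|u\|\ge\gamma r>0$, it is a positive solution of \eqref{e-per-Zhong}.

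\emph{Main obstacle.} In the sign-changing regime the positive cone carries no Harnack inequality — as noted above, $G_P$ is constant along the lines of slope one, so no strictly positive $\phi$ obeys \eqref{G-banda} — so the usual estimates controlling $\min u$ by $\|u\|$ are unavailable; the substitute is the two-sided bound $m\le f\le M$ together with the sharp pointwise comparison $mA\ge MB$, i.e.\ $M/m\le\delta$, and verifying that $\int_0^T G_P^{\pm}(t,s)\,ds$ are genuinely independent of $t$ is the technical point that makes $\delta$ meaningful as a single constant. In the case $\delta=\infty$ the only delicate step is the inner-sphere estimate: when $f(0)=0$ the threshold $\rho^2$ must be used exactly — a crude superlinear bound does not suffice — so one really needs that $1/\rho^2$ is the principal eigenvalue of $L$ and the associated index computation. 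I expect this eigenvalue comparison to be the main obstacle; the remaining estimates are routine.
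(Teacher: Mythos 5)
First, a point of reference: the paper does not prove Theorem \ref{t-ZA} at all --- it is quoted from \cite[Theorem 3]{ZA} as background, and the authors' own contribution is the generalization proved in Section \ref{sect-periodic} (nonconstant potential, nonautonomous nonlinearity, hypotheses $(H_1)$--$(H_3)$), specialized to constant potential in Section \ref{sect_ex_per}. Measured against that machinery, your route in the sign-changing case $\delta<\infty$ is genuinely different and, for this autonomous constant-potential problem, simpler: you use the translation invariance $G_P(t,s)=G_P(0,(t-s)\bmod T)$ to see that $A=\int_0^TG_P^+(t,s)\,ds$ and $B=\int_0^TG_P^-(t,s)\,ds$ are constants with $A-B=1/\rho^2$ and $\delta=A/B>1$, so that $m\le f\le M$ and $M/m\le\delta$ give $\mathcal Nu\ge mA-MB\ge0$ on the \emph{whole} positive cone, and a norm-form Krasnoselskii argument (inner sphere from $f(0)\ge m>0$, outer sphere from $f\le M$) finishes. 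The paper instead works in the smaller cone $\{u\ge 0,\ \int_0^Tu\ge\sigma\|u\|\}$ with the fixed-point index and hypothesis $(H_3)$; that extra structure is superfluous here precisely because $a$ is constant, but it is what lets the paper handle nonconstant $a$, nonautonomous $f$, and the Dirichlet/Neumann/mixed problems. Note that nothing in your $\delta<\infty$ argument uses $\rho\le 3\pi/(2T)$, so like the paper you in fact cover every nonresonant $\rho>\pi/T$; and your remark that the two-sided bound $m\le f\le M$ substitutes for the unavailable Harnack inequality is exactly the paper's point about condition \eqref{G-banda}.

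The one genuine gap is the endpoint $\rho=\pi/T$ of the branch $\delta=\infty$: there $G_P\ge0$ but vanishes on the diagonal, so your Harnack constant $\min G_P/\max G_P$ is zero, the cone $\{u\ge0:\min u\ge\gamma\|u\|\}$ degenerates, and ``a modified cone or a limiting argument'' is an IOU rather than a proof. The standard repair is precisely the integral cone of this paper and of \cite{gkw1,webb}: since $\int_0^TG_P(t,s)\,dt=1/\rho^2>0$ for every $s$, the cone $\{u\ge0:\int_0^Tu(t)\,dt\ge\sigma\|u\|\}$ is invariant and the two index computations go through with $\int_0^Tu$ in place of $\min u$. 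Two smaller caveats: when $M/m=\delta$ exactly, your lower bound $mA-MB$ is $0$, so you obtain only a nontrivial nonnegative solution rather than pointwise positivity (a defect shared with the original statement); and in the $\delta=\infty$ inner-sphere step you should make explicit that for $u$ in the cone with $\|u\|\le r$ one has $u(s)\in[0,r]$, so $f(u(s))\ge(\rho^2+\varepsilon)u(s)$ applies, after which the iteration $u\ge\lambda\mathbf 1\Rightarrow u\ge\lambda\bigl(1+(\rho^2+\varepsilon)/\rho^2\bigr)\mathbf 1$ based on $L\mathbf 1=\rho^{-2}\mathbf 1$ yields the contradiction you want.
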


\noindent	Concerning this specific case, along this paper we improve the range of the values $\rho$ for which the result is still valid. Furthermore, we apply our study to nonconstant potentials and nonautonomous nonlinear parts.
	
	As we will see, some of the positivity conditions imposed in the periodic BVP cannot be adapted for the Dirichlet BVP, so the approach that must be used needs to be considerably modified, by using, in this case, a different type of cones.

	The rest of the paper is divided in the following way: in Section \ref{sect-preliminaries} we state some preliminary results considering the Hill's operator, in Section \ref{sect-periodic} are proved some new results concerning the existence of a positive solution for the Hill's periodic BVP in the case that the Green's function may change sign. Moreover, on this section, such existence results are generalized to other boundary conditions. In Section  \ref{sect_ex_per} we improve Theorem \ref{t-ZA} for the periodic problem with constant potential and in Section \ref{sect-Dir-constant} we approach the Dirichlet BVP, also in the case of constant potential, where as far as we know, no results for sign changing Green's function were proved before.

	\section{Preliminaries}
	\label{sect-preliminaries}
	Let $L[a]$ be the Hill's operator related to the potential  $a$
	\begin{equation*}
	L[a]\,u(t)\equiv u''(t)+ a(t)\,u(t), \quad t\in [0,T]\equiv I,
	\end{equation*}
	where $a\colon  I\rightarrow \mathbb{R}$, $a\in L^\alpha( I)$, $\alpha\ge 1$.
	
	Let $X \subset W^{2,1}( I)$ be a Banach space such that the homogeneous problem
	\begin{equation}
	\label{e-Hill-X}
	L[a]\,u(t)=0, \quad a.\,e. \ t\in I, \quad u\in X
	\end{equation}
	has only the trivial solution. This condition is known as operator $L[a]$ is nonresonant in $X$.
	
	Moreover, it is very well known that if this condition is satisfied and $\sigma \in L^1( I)$, the nonhomogeneous problem
	\begin{equation*}
	L[a]\,u(t)=\sigma(t), \quad a.\,e. \ t\in I, \quad u\in X
	\end{equation*}
	has a unique solution given by
	\begin{equation*}
	u(t)=\int_{0}^{T} G(t,s)\,\sigma(s)\, ds, \quad t\in I,
	\end{equation*}
	where $G$ is the corresponding Green's function.
	
	We denote $x \succ 0$ on $I$ if and only if  $x \ge 0$ on $I$ and $\int_0^T{x(s) \, ds } >0$.
	It is said that operator $L[a]$ satisfies a strong maximum principle (MP) in $X$ if and only if
	\begin{equation*}
	u\in X, \; L[a]\,u \succ 0 \text{ in }  I\Rightarrow u < 0 \text{ in }  (0,T).
	\end{equation*}
	Analogously, $L[a]$ satisfies the antimaximum principle (AMP) in $X$ if and only if
	\begin{equation*}
	u\in X, \; L[a]\,u \succ 0 \text{ in }  I\Rightarrow u > 0 \text{ in }   (0,T).
	\end{equation*}
	
	The next result is a direct consequence of \cite[Corollaries 1.6.6 and 1.6.12]{cabada2}, and it ensures that the maximum and anti-maximum principles for the periodic problem are equivalent to the constant sign of the Green's function.  
	
	\begin{lemma}
		\label{l-Green-comparison} 
				The following claims are equivalent:
		\begin{itemize}
			
			\item[(1)] The related Green's function $G$ of problem \eqref{e-Hill-X} satisfies $G (t,s) \ge 0$ $(\le 0)$ on $ I\times  I$.
			
			\item[(2)] Operator $L[a]$ satisfies a strong maximum (antimaximum) principle  in $X$.
			
		\end{itemize}
	\end{lemma}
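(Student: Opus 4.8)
The plan is to move back and forth between the pointwise sign of $G$ and the sign properties of solutions by means of the representation $u(t)=\int_0^T G(t,s)\,\sigma(s)\,ds$ with $\sigma=L[a]\,u$. Throughout I rely on two qualitative facts about the Hill Green's function that are exactly what the cited Corollaries of \cite{cabada2} supply: $G$ is jointly continuous on $I\times I$, and, more importantly, a constant (nonstrict) sign of $G$ on the whole square $I\times I$ is automatically a strict sign on $(0,T)\times(0,T)$ (equivalently, for fixed $t_0\in(0,T)$ the section $G(t_0,\cdot)$ does not vanish on any set of positive measure). I argue only the equivalence ``$G\le 0$ on $I\times I$ $\Leftrightarrow$ strong maximum principle''; the antimaximum / ``$G\ge 0$'' case is obtained verbatim by reversing all inequalities.

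$(1)\Rightarrow(2)$. Assume $G\le 0$ on $I\times I$ and let $u\in X$ satisfy $L[a]\,u=\sigma\succ 0$, i.e.\ $\sigma\ge 0$ a.e.\ and $\int_0^T\sigma>0$. Then $u(t)=\int_0^T G(t,s)\,\sigma(s)\,ds\le 0$ for every $t\in I$. To upgrade this to $u<0$ on $(0,T)$, fix $t_0\in(0,T)$ with $u(t_0)=0$; since $G(t_0,\cdot)\,\sigma(\cdot)\le 0$ a.e.\ and has zero integral, it vanishes a.e., so $G(t_0,s)=0$ for a.e.\ $s$ in the positive-measure set where $\sigma>0$. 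This contradicts the strictness fact quoted above, so no such $t_0$ exists and $u<0$ on $(0,T)$.

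$(2)\Rightarrow(1)$. Assume the strong maximum principle and suppose, for contradiction, that $G(t_0,s_0)>0$ for some $(t_0,s_0)\in I\times I$. By joint continuity of $G$ we may, shrinking a neighbourhood of $s_0$ and, if $t_0\in\{0,T\}$, replacing $t_0$ by a nearby interior point, assume $t_0\in(0,T)$ and that there is a subinterval $J\subset I$ of positive length with $G(t_0,s)>0$ for all $s\in J$. Choose $\sigma\in L^1(I)$ with $\sigma\ge 0$, $\int_0^T\sigma>0$ and $\sigma$ vanishing off $J$, so $\sigma\succ 0$; the corresponding $u\in X$ then satisfies $u(t_0)=\int_J G(t_0,s)\,\sigma(s)\,ds>0$, contradicting $u<0$ on $(0,T)$. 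Hence $G\le 0$ on $I\times I$.

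The main obstacle is the promotion step in $(1)\Rightarrow(2)$: passing from $u\le 0$ on $I$ to $u<0$ on $(0,T)$ requires that a nonstrictly signed periodic Green's function be strictly signed in the interior, which is not a formality — it uses uniqueness for the (a.e.) linear ODE satisfied by $s\mapsto G(t_0,s)$ on $[0,t_0]$ and on $[t_0,T]$, together with the diagonal matching and jump conditions of $G$, and this is precisely the content borrowed from \cite[Corollaries 1.6.6 and 1.6.12]{cabada2}. Everything else reduces to the representation formula and the continuity of $G$.
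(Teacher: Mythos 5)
The paper offers no argument for this lemma beyond the citation of \cite[Corollaries 1.6.6 and 1.6.12]{cabada2}, and your proof is essentially a correct unpacking of that citation: the representation $u(t)=\int_0^T G(t,s)\,\sigma(s)\,ds$ settles both implications once one knows that a constant-sign Green's function cannot have a section $G(t_0,\cdot)$, $t_0\in(0,T)$, vanishing on a set of positive measure, and you correctly isolate that as the only nontrivial input and defer it to the same reference the paper invokes. Two remarks. First, your parenthetical ``equivalently'' is false: strict sign of $G$ on $(0,T)\times(0,T)$ is genuinely stronger than the section property you actually use. For $a\equiv(\pi/T)^2$ the periodic Green's function is $G_P(t,s)=\tfrac{T}{2\pi}\sin\left(\pi|t-s|/T\right)\ge 0$, which vanishes along the whole diagonal of the open square, yet each section $G_P(t_0,\cdot)$ vanishes at a single point; only the (true) section form enters your contradiction argument, so the proof survives, but the two formulations should not be presented as equivalent. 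Second, with the paper's definitions (MP means $L[a]u\succ 0\Rightarrow u<0$ on $(0,T)$), the correct pairing is $G\le 0\Leftrightarrow$ MP and $G\ge 0\Leftrightarrow$ AMP, which is the version you prove; the pairing suggested by the parentheses in the statement of the lemma is the reverse one and appears to be a typo in the paper rather than an error on your part.
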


	We will consider now the periodic boundary value problem
	\begin{equation}\label{e-P} \tag{$P$}\begin{split}
	 u''(t)+a(t)\,u(t)=0, \; t\in I,	\quad	u(0)=u(T),\; u'(0)=u'(T),
		\end{split}\end{equation}
			and we will denote its related Green's function as $G_P$.

	Now, let ${\lambda}_P$ be the smallest eigenvalue of the periodic problem
	$$
	u''(t)+(a(t) + \lambda)\, u(t)=0,\quad \mbox{ a. e.  }\;t\in I, \qquad u(0)=u(T),\; u'(0)=u'(T),
	$$
	and let ${\lambda}_A$ be the smallest eigenvalue of the anti-periodic problem
	$$
	u''(t)+(a(t) + \lambda)\, u(t)=0,\quad \mbox{ a. e.  }\;t\in I,
	\qquad u(0)=-u(T),\; u'(0)=-u'(T).
	$$
	In \cite{zhang} it is proved that  ${\lambda}_P<{\lambda}_A$.
	The following result relates the constant sign of the periodic Green's function with the sign of these eigenvalues:
	\begin{lemma} \cite[Theorem 1.1]{zhang}
		\label{l-zhang-eigen}
		Suppose that $a \in L^1( I)$, then:
		\begin{enumerate}
			\item $G_P(t,s) \le 0$ on $ I\times  I$ if and only if ${\lambda}_P>0$.
			\item $G_P(t,s) \ge 0$ on $ I\times  I$ if and only if ${\lambda}_P< 0\le {\lambda}_A$.
		\end{enumerate}
	\end{lemma}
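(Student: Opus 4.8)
The plan is to translate both claims into statements about maximum and antimaximum principles via Lemma~\ref{l-Green-comparison}, and then to read off the sign of the eigenvalues from the variational and oscillation structure of Hill's operator. By Lemma~\ref{l-Green-comparison}, claim (1) is equivalent to ``$L[a]$ satisfies a strong maximum principle in the periodic space'' and claim (2) to ``$L[a]$ satisfies an antimaximum principle''. Since $\lambda_P$ is exactly the bottom of the periodic spectrum, $\lambda_P>0$ already forces $0$ not to be a periodic eigenvalue, i.e.\ \eqref{e-P} is nonresonant, and conversely nonresonance forces $\lambda_P\neq 0$. Hence it suffices to prove: (i) (MP) $\Longleftrightarrow\lambda_P>0$, and (ii) (AMP) $\Longleftrightarrow\lambda_P<0\le\lambda_A$.

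For (i) I would use the ground-state transform. Let $\varphi_0$ be the simple, zero-free periodic eigenfunction associated with $\lambda_P$, normalized so that $\varphi_0>0$ on $I$; thus $L[a]\varphi_0=-\lambda_P\,\varphi_0$. Given a periodic $u$ with $\sigma:=L[a]u\succ 0$, the substitution $z:=u/\varphi_0$ (again $T$-periodic) recasts the equation in self-adjoint form, $(\varphi_0^2\,z')'-\lambda_P\,\varphi_0^2\,z=\varphi_0\,\sigma\ge 0$. If $\lambda_P>0$, testing the weak formulation of this identity against the positive part $z^+$ gives $\int_0^T\varphi_0^2\,((z^+)')^2\,dt+\lambda_P\int_0^T\varphi_0^2\,(z^+)^2\,dt\le 0$, hence $z^+\equiv 0$ and $u\le 0$ on $I$; since $u\not\equiv 0$, a standard unique-continuation argument rules out interior zeros and gives $u<0$ on $(0,T)$, so (MP) holds. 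Conversely, if $\lambda_P<0$ then $L[a]\varphi_0=-\lambda_P\,\varphi_0\succ 0$ while $\varphi_0(t)=\int_0^T G_P(t,s)\,L[a]\varphi_0(s)\,ds$; a nonpositive $G_P$ would force $\varphi_0\le 0$, a contradiction, and $\lambda_P=0$ is excluded by nonresonance. This settles claim (1).

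For (ii), the sign of $\lambda_P$ is cheap: if (AMP) holds then $G_P\ge 0$, so by claim (1) we cannot have $\lambda_P>0$ (that would force $G_P\le 0$, hence $G_P\equiv 0$, which is impossible), and nonresonance rules out $\lambda_P=0$; thus $\lambda_P<0$. The genuine content---and the step I expect to be the main obstacle---is the appearance of the \emph{antiperiodic} eigenvalue $\lambda_A$, which does not enter the periodic energy functional at all. Here I would invoke the finer structure of Hill's equation: the interlacing $\lambda_P<\lambda_A$ already quoted from \cite{zhang}, the fact that the first antiperiodic eigenfunction has exactly one zero per period, and the classical identification of the periodic and antiperiodic eigenvalues with the endpoints of the stability intervals. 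Writing $G_P$ explicitly through the principal fundamental system and its monodromy matrix $C$, the sign of $G_P$ in each regime is governed by the Hill discriminant $\Delta(0)$ (the trace of $C$) together with the oscillation of the fundamental solutions; one then shows that $G_P$ has constant sign precisely when $0$ lies in $(-\infty,\lambda_P)$ (with sign $\le 0$) or in $(\lambda_P,\lambda_A]$ (with sign $\ge 0$), and that $G_P$ necessarily changes sign once $0>\lambda_A$---using, in that last case, the sign-changing first antiperiodic eigenfunction together with the doubling $t\mapsto t+T$, which carries $T$-antiperiodic data to $2T$-periodic data, to manufacture a periodic $u$ with $L[a]u\succ 0$ that changes sign, contradicting (AMP). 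The delicate point is that the sign of $\det(I-C)$, which equals the sign of $2-\Delta(0)$, is the same in the first stability interval and in the first instability gap, so detecting the sign change of $G_P$ requires the full analysis of the kernel rather than just of the discriminant.

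Finally, two technical points must be handled with care throughout: the low regularity $a\in L^1(I)$, $u\in W^{2,1}(I)$, which means the maximum-principle arguments have to be run in the weak / absolutely-continuous sense (working with the absolutely continuous function $\varphi_0^2\,z'$ rather than with pointwise second derivatives); and the justification of the positivity and nodal properties of the principal periodic and antiperiodic eigenfunctions in this $L^1$ setting---these are standard consequences of Sturm--Liouville / Floquet theory for Hill's operator and are precisely the tools underlying \cite[Theorem~1.1]{zhang}.
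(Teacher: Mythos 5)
The paper does not prove this lemma at all: it is quoted verbatim from Zhang \cite[Theorem 1.1]{zhang}, so the only ``proof'' in the paper is the citation. Judged on its own merits, your argument for claim (1) is essentially complete and correct: the ground-state substitution $z=u/\varphi_0$, the test against $z^+$ to get $\int_0^T\varphi_0^2\bigl((z^+)'\bigr)^2\,dt+\lambda_P\int_0^T\varphi_0^2(z^+)^2\,dt\le 0$, the Gronwall-type unique continuation at an interior zero, and the converse via $\varphi_0(t)=-\lambda_P\int_0^T G_P(t,s)\varphi_0(s)\,ds$ all go through in the $W^{2,1}$/$L^1$ setting. The same is true of the easy half of claim (2), namely $G_P\ge 0\Rightarrow\lambda_P<0$.

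The genuine gap is exactly where you flag it: both implications that involve $\lambda_A$ are asserted rather than proved. For the direction $\lambda_P<0\le\lambda_A\Rightarrow G_P\ge 0$ you offer nothing beyond ``one then shows that $G_P$ has constant sign precisely when $0$ lies in $(\lambda_P,\lambda_A]$'', which is a restatement of the claim; no energy functional sees $\lambda_A$, so some new mechanism (in Zhang's proof, a rotation-number/comparison argument linking the antiperiodic eigenvalue to the first Dirichlet eigenvalue of the half-period problems, equivalently disconjugacy of $L[a]$ on intervals of length $T$) must be supplied. For the direction $G_P\ge 0\Rightarrow\lambda_A\ge 0$ your doubling construction does not work as described: if $\lambda_A<0$ and $\psi$ is the first antiperiodic eigenfunction, then $L[a]\psi=-\lambda_A\,\psi$ \emph{changes sign} (because $\psi$ has a zero in $[0,T)$), so $\psi$ does not furnish a datum $\succ 0$, and passing to the $2T$-periodic extension changes the boundary value problem (and hence the Green's function) rather than producing a $T$-periodic counterexample to the antimaximum principle. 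A correct argument here again goes through the Dirichlet problem on the interval between two consecutive zeros of $\psi$, where $-\lambda_A$ being below the first Dirichlet eigenvalue fails; as written, the crucial half of claim (2) is not established.
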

	
If we consider other boundary value problems, such as Neumann problem,
	\begin{equation}\label{Neumann} \tag{$N$}\begin{split}
	 u''(t)+a(t)\,u(t)=0, \; t\in I,	\quad  u'(0)=u'(T)=0;
	\end{split}\end{equation}
	Dirichlet problem,
	\begin{equation}\label{Dirichlet} \tag{$D$}\begin{split}
	u''(t)+a(t)\,u(t)=0, \; t\in I,\quad u(0)=u(T)=0;
	\end{split}\end{equation}
	and Mixed problems
	\begin{equation}\label{Mixed1} \tag{$M_1$}\begin{split}
	u''(t)+a(t)\,u(t)=0, \; t\in I,\quad u'(0)=u(T)=0;
	\end{split}\end{equation}
	\begin{equation}\label{Mixed2} \tag{$M_2$}\begin{split} 
	u''(t)+a(t)\,u(t)=0, \;  t\in I,\quad u(0)=u'(T)=0;
	\end{split}\end{equation}
	 denoting $G_N$, $G_D$, $G_{M_1}$ and $G_{M_2}$ the related Green's functions and $\lambda_N$, $\lambda_D$, $\lambda_{M_1}$ and $\lambda_{M_2}$ the correspondent smallest eigenvalue of each of the problems, we know that the following results are satisfied (see \cite{cacilu}):
	
	\begin{lemma}
	\label{l-Meu-Dir-Mix}
	\begin{enumerate}
		\item $G_N(t,s)<0$ on $ I\times I$ if and only if $\lambda_N>0$.
		
		\item $G_N(t,s)\ge 0$ on $ I\times I$ if and only if $\lambda_N<0$, $\lambda_{M_1}\ge 0$ and $\lambda_{M_2}\ge 0$.
		
		\item $G_N$ changes sign if and only if $\min\{\lambda_{M_1},\,\lambda_{M_2}\}< 0$.
		
		\item $G_D(t,s)<0$ on $(0,T)\times(0,T)$ if and only if $\lambda_D>0$.
		
		\item $G_D$ changes sign if and only if $\lambda_D<0$.
		
		\item $G_{M_1}(t,s)<0$ on $[0,T)\times[0,T)$ if and only if $\lambda_{M_1}>0$.
		
		\item $G_{M_1}$ changes sign if and only if $\lambda_{M_1}<0$.
		
		\item $G_{M_2}(t,s)<0$ on $(0,T]\times(0,T]$ if and only if $\lambda_{M_2}>0$.
		
		\item $G_{M_2}$ changes sign if and only if $\lambda_{M_2}<0$.		
	\end{enumerate}
	\end{lemma}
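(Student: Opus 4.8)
The plan is to convert each assertion into sign information carried by two objects: a pair of solutions of $L[a]\,u=0$ adapted to the boundary data, and the transfer matrix $M=M(a)\in SL(2,\R)$ sending $(u(0),u'(0))$ to $(u(T),u'(T))$. Taking the columns of $M$ to be the evolutions of $(1,0)$ and $(0,1)$ one has $\det M=1$, and a nontrivial solution meeting the Neumann, $M_1$, $M_2$ or Dirichlet conditions exists precisely when, respectively, $M_{21}$, $M_{11}$, $M_{22}$ or $M_{12}$ vanishes; thus nonresonance of \eqref{e-Hill-X} amounts to nonvanishing of the matching entry. Applying this to $L[a+\lambda]$ makes each entry an entire function of $\lambda$ whose zeros are the eigenvalues of the corresponding problem; since all four entries are positive as $\lambda\to-\infty$, the sign of an entry at $\lambda=0$ records on which side of the \emph{first} eigenvalue of that problem $0$ lies, and this reading is unambiguous in every configuration below because of the interlacing $\lambda_N\le\min\{\lambda_{M_1},\lambda_{M_2}\}\le\max\{\lambda_{M_1},\lambda_{M_2}\}\le\lambda_D$ (together with the strict interlacing of consecutive eigenvalues of the four problems).

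With this set up I would first dispatch the maximum-principle statements (1), (4), (6), (8). If the relevant first eigenvalue is positive then $0$ lies below the spectrum of $-L[a]$ under that boundary condition, so $L[a]$ satisfies a strong maximum principle there (the argument behind Lemma~\ref{l-Green-comparison} carries over), whence $G_\bullet<0$ on the stated set; conversely, if that eigenvalue were $\le 0$ it would be $<0$ by nonresonance, and its positive first eigenfunction $\varphi$ would satisfy $L[a]\varphi=-\lambda\,\varphi\succ 0$ with $\varphi>0$, contradicting $G_\bullet<0$. The exact (open or closed) domain and the strictness up to the boundary come out of writing $G_\bullet$ through the adapted pair of solutions; in particular, for $D$, $M_1$, $M_2$ a local computation near a corner where a Dirichlet condition is imposed gives $G_\bullet(t,s)\approx -t$ near $(0,0)$ and $G_\bullet(t,s)\approx t-T$ near $(T,T)$, so these kernels are strictly negative in a neighbourhood of such a ``Dirichlet corner'' for \emph{every} potential $a$. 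This last fact yields (5), (7), (9) at once: the first eigenvalue is nonzero by nonresonance, and if it is negative then the eigenfunction argument shows $G_\bullet$ is not everywhere negative while it is strictly negative near a Dirichlet corner, so $G_\bullet$ changes sign; the only borderline alternative, that $G_\bullet\le 0$ with an interior zero, is excluded by uniqueness for the linear ODE (such a zero would force $G_\bullet(\cdot,s)$ to vanish on a whole subinterval). The reverse implications are immediate from (4)/(6)/(8).

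The Neumann pair (2)--(3) is the genuinely delicate part; (1) already covers $\lambda_N>0$, so assume $\lambda_N<0$. For the implication ``$\min\{\lambda_{M_1},\lambda_{M_2}\}<0\Rightarrow G_N$ is not everywhere $\ge 0$'' I would argue by duality: if, say, $\lambda_{M_1}<0$, the first $M_1$-eigenfunction $\varphi_{M_1}\ge 0$ (with $\varphi_{M_1}'(0)=0$, $\varphi_{M_1}(T)=0$ and $\varphi_{M_1}'(T)<0$) has $L[a]\varphi_{M_1}\succ 0$, and Green's identity against $v=\int_0^T G_N(\cdot,s)\,\sigma(s)\,ds$ with $\sigma\succ 0$ concentrating as $s\to T$ yields, in the limit,
\[
\lambda_{M_1}\int_0^T G_N(t,T)\,\varphi_{M_1}(t)\,dt=-G_N(T,T)\,\varphi_{M_1}'(T);
\]
if $G_N\ge 0$ the left side is $\le 0$ and the right side is $\ge 0$, so both vanish and $G_N(\cdot,T)\equiv 0$, which is impossible. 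For the reverse implication I would exploit that $N$ and $M_1$ share the condition $u'(0)=0$, so their Green's functions differ by a rank-one term: with $w_0$ the solution normalized by $w_0(0)=1$, $w_0'(0)=0$,
\[
G_N(t,s)=G_{M_1}(t,s)-\frac{w_0(t)\,w_0(s)}{w_0(T)\,w_0'(T)},
\]
and there is a symmetric identity involving $M_2$ and the solution $w_T$ with $w_T(T)=1$, $w_T'(T)=0$. When $\lambda_{M_1}>0$ one has $w_0>0$ on $[0,T]$ and $G_{M_1}<0$; factoring $w_0(t)w_0(s)$ out of the identity reduces ``$G_N\ge 0$'' to the scalar inequality $\min_{\tau\in[0,T]}\phi_2(\tau)/w_0(\tau)\ge 1/w_0'(T)$, where $\phi_2$ is the $M_1$-solution with $\phi_2(T)=0$; the ratio $\phi_2/w_0$ is strictly monotone (its derivative is a constant Wronskian over $w_0^2$), so the minimum sits at $\tau=0$ and equals $-M_{12}(0)$, and using $\det M(0)=1$ the inequality collapses to $M_{11}(0)\,M_{22}(0)\ge 0$, that is, to $\lambda_{M_1}\ge 0$ and $\lambda_{M_2}\ge 0$. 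This is exactly (2) (the borderline $\lambda_{M_1}=0$ being treated identically from the $M_2$-identity), and then (3) follows: if $\min\{\lambda_{M_1},\lambda_{M_2}\}<0$ then also $\lambda_N<0$, so by (1) $G_N$ is not everywhere negative and by (2) it is not everywhere $\ge 0$, hence --- again excluding a semidefinite $G_N$ with an interior zero by ODE uniqueness --- it changes sign, while conversely a sign change contradicts both (1) and (2).

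I expect the main obstacle to be the sign bookkeeping in the last paragraph: one must check that each eigenvalue hypothesis genuinely pins down the signs it is used for --- $w_0>0$ and $w_0(T)>0$ from $\lambda_{M_1}>0$; the sign of $w_0'(T)$ equal to the sign of $\lambda_N$; the monotonicity of $\phi_2/w_0$; and so on --- which rests on the monotonicity of the first eigenvalue of each problem under shrinking of the interval, on the interlacing inequalities, and on the fact that at $\lambda=0$ the sign of the relevant entry of $M$ truly reflects the sign of the \emph{first}, and not of a higher, eigenvalue of that boundary value problem.
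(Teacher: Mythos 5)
A preliminary but important observation: the paper contains no proof of Lemma~\ref{l-Meu-Dir-Mix} at all --- it is quoted as a known result from \cite{cacilu}, where the argument is organised around relating $G_N$, $G_D$, $G_{M_1}$, $G_{M_2}$ to periodic and antiperiodic Green's functions of extended potentials and then invoking the periodic characterisations of Lemma~\ref{l-zhang-eigen}. So your shooting/transfer-matrix proof is genuinely independent, and most of it checks out. The identity you obtain ``in the limit'' is in fact exact and direct: Green's identity applied to $v=G_N(\cdot,s)$ and $\varphi_{M_1}$ gives $\varphi_{M_1}(s)+\lambda_{M_1}\int_0^T G_N(t,s)\,\varphi_{M_1}(t)\,dt=-\varphi_{M_1}'(T)\,G_N(T,s)$, which at $s=T$ is your formula, and the sign argument then correctly forces $G_N(\cdot,T)\equiv 0$, a contradiction. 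The rank-one relation $G_N(t,s)=G_{M_1}(t,s)-w_0(t)w_0(s)/\bigl(w_0(T)\,w_0'(T)\bigr)$ is correct (both kernels solve the same problem with $v'(0)=0$, so they differ by a symmetric multiple of $w_0\otimes w_0$, and evaluating at $t=T$ fixes the constant), and the monotone-ratio computation does reduce ``$G_N\ge 0$'' to $-M_{12}M_{21}\le 1$, i.e.\ to $M_{11}M_{22}\ge 0$ via $\det M=1$. The corner analysis and the jump-in-the-derivative argument excluding a semidefinite kernel with an interior zero are also sound (though near $(0,0)$ the Dirichlet-type kernels behave like $-\min(t,s)$, not $-t$; immaterial).

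Two points need attention. First, ``$M_{11}M_{22}\ge 0$, that is, $\lambda_{M_1}\ge 0$ and $\lambda_{M_2}\ge 0$'' is false as a free-standing equivalence (both entries can be negative); it is only valid because you have already assumed $\lambda_{M_1}>0$, which forces $w_0>0$ on $[0,T]$ and $M_{11}>0$ --- make that dependence explicit, since the factorisation of $w_0(t)w_0(s)$ out of the inequality also relies on it. Second, and this is the genuine gap: the borderline case $\lambda_{M_1}=\lambda_{M_2}=0$ actually occurs (take $a\equiv(\pi/2T)^2$ on $[0,T]$: then $w_0=\cos(\pi t/2T)$ vanishes at $T$ and $z=\sin(\pi t/2T)$ has $z'(T)=0$, so both mixed problems are resonant at $\lambda=0$ simultaneously). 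In that case neither $G_{M_1}$ nor $G_{M_2}$ exists, so neither of your two rank-one identities is available and the ``if'' direction of item (2) is not covered. It can be repaired by replacing $a$ with $a-\varepsilon$ (which pushes both mixed eigenvalues strictly above $0$ while keeping the Neumann problem nonresonant) and letting $\varepsilon\to 0^+$ using continuous dependence of $G_N$ on the potential, but that step must be stated. Finally, all the sign readings ($w_0'(T)<0$ when $\lambda_N<0$, $M_{11}>0$ when $\lambda_{M_1}>0$, etc.) require that $0$ lie below the \emph{second} eigenvalue of the relevant problem; this does follow from the Sturm--Liouville interlacing you invoke, but since the whole proof hinges on it, it deserves a precise statement and reference rather than the closing caveat.
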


	\section{Periodic boundary value problems}
	\label{sect-periodic}
	Consider now the following nonlinear and nonautonomous periodic boundary value problem
	\begin{equation}\label{hill_periodic}\begin{split}
	u''(t)+a(t)\,u(t)=f(t,u(t)), \; t\in I, \quad u(0)=u(T), \ u'(0)=u'(T).
	\end{split}\end{equation}
	
	We will assume that problem $(P)$ is nonresonant and ${\lambda}_A< 0$. From Lemma \ref{l-zhang-eigen}, it is clear that in this case the related Green's function changes its sign on $I \times I$.
	
	On the other hand, it is well-known that there exists $v_P$, a positive eigenfunction on $I$, unique up to a constant, related to $\lambda_P$, that is, $v_P$ is such that
	\begin{equation*}\begin{split}
	& v_P''(t)+a(t)\,v_P(t)=-\lambda_P\,v_P(t), \quad a.\,e. \ t\in I,\\
	& v_P(0)=v_P(T), \ v_P'(0)=v_P'(T).
	\end{split}\end{equation*}
	Therefore,
	\begin{equation*}
	v_P(t)=-\lambda_P\int_{0}^{T} G_P(t,s)\,v_P(s)\,ds
	\end{equation*}	
	and, since $v_P$ is positive and $\lambda_P<0$, we have that
	\begin{equation*}
	\int_{0}^{T} G_P(t,s)\,v_P(s)\,ds>0 \quad \forall\,t\in I
	\end{equation*}	
	and, consequently
	\begin{equation*}
	\int_{0}^{T} G^+_P(t,s)\,v_P(s)\,ds>\int_{0}^{T} G^-_P(t,s)\,v_P(s)\,ds  \quad \forall\,t\in I,
	\end{equation*}		
	where $G^+_P$ and $G^-_P$ are the positive and negative parts of $G_P$.
	
	Since the Green's function changes sign, it makes sense to define
	\begin{equation*}
	\gamma= \inf_{t\in I} \frac{\int_{0}^{T} G^+_P(t,s)\,v_P(s)\,ds}{\int_{0}^{T} G^-_P(t,s)\,v_P(s)\,ds}\, (>1) .
	\end{equation*}
	
	Moreover, in order to ensure the existence of solutions of problem \eqref{hill_periodic}, we will make the following assumptions:
	\begin{enumerate}
		\item[$(H_1)$] $f\colon I\times [0,\infty) \rightarrow [0,\infty)$ satisfies $L^1$-Carath\'eodory conditions, that is, $f(\cdot,u)$ is measurable for every $u\in\mathbb{R}$, $f(t,\cdot)$ is continuous for a.\,e. $t\in I$ and for each $r>0$ there exists $\phi_r \in L^1 (I)$ such that $f(t,u) \le \phi_r(t)$ for all $u\in[-r,r]$ and a.\,e. $t\in I$.
		
		\item[$(H_2)$] There exist two positive constants $m$ and $M$ such that $m\,v_P(t)\le f(t,x) \le M\,v_P(t)$ for every $t\in I$ and $x\ge 0$. Moreover, these constants satisfy that $\frac{M}{m}\le \gamma$.  
		
		\item[$(H_3)$] There exists $[c,d]\subset  I$ such that $\int_{c}^{d} G_P(t,s) dt \ge 0,$ for all $s\in  I$ and $\int_{c}^{d} G_P(t,s) dt > 0,$ for all $s\in  [c,d]$.
		
	\end{enumerate}
	
	\begin{remark}
	\label{r-condi-per}
	We note that condition $(H_2)$ includes, as particular cases, hypotheses $(J2)$ and $(J3)$ in Theorem \ref{t-ZA} imposed in \cite{ZA}. This is due to the fact that if $a(t)=\rho^2$, as in problem \eqref{e-per-Zhong}, we have that $\lambda_P=-\rho^2$ and $v_P(t)=1$ for all $t \in I$.
	Moreover, as we will point out in Section \ref{sect_ex_per}, we have that if $a(t)=\rho^2$ then
	\[\int_{0}^{T} {G_P(t,s)\,ds}=\frac{1}{\rho^2},\]
and condition $(H_3)$ is trivially fulfilled for $[c,d]=I$. 

 Moreover, we note that in $(H_2)$ we are not considering the possibility of $m=0$. Theorem \ref{t-ZA} includes this case, but only when $\gamma=+\infty$, which only happens when the Green's function is nonnegative. In \cite{ZA} the authors consider this possibility because they are assuming that $\rho\in \left( 0, \frac{3\,\pi}{2\,T}\right]$ and when $\rho\in \left( 0, \frac{\pi}{T}\right]$, $G_P$ is nonnegative. As we will see in Corollary \ref{cor-GP-posit}, hypothesis $(H_2)$ is not necessary in this case, so this is the reason why we do not consider the possibility $m=0$.
	\end{remark}
	
	We will consider the Banach space $(\mathcal{C}( I,\mathbb{R}), \, \|\cdot\|)$  coupled with the supremum norm $\|u\|\equiv \|u\|_\infty$, and define the cone 
	\[K=\left\{u\in \mathcal{C}( I,\mathbb{R}); \ u\ge 0 \, \mbox{on $I$}, \ \int_{0}^{T}u(s)\,ds \ge \sigma \|u\| \right\}, \]
	where 
	\begin{equation*}
	\sigma = \frac{\eta}{\displaystyle \max_{t,\,s\in I} \left\{ G_P(t,s)\right\}},
	\end{equation*}
being
	\begin{equation}
	\label{e-eta}
	\eta =\min_{s\in [c,d]}  \left\{\int_{c}^{d}G_P(t,s)\,dt\right\}.
	\end{equation}
	
	Now, it is clear that $u$ is a solution of the periodic problem \eqref{hill_periodic} if and only if it is a fixed point of the following operator
	\begin{equation*}
	{\mathcal{T}} u(t)=\int_{0}^{T} G_P(t,s)\,f(s,u(s)) \,ds.
	\end{equation*}
	
	\begin{lemma}
		Assume hypothesis $(H_1)-(H_3)$. Then ${\mathcal{T}} \colon \cC( I) \rightarrow \cC( I)$ is a completely continuous operator which maps the cone $K$ to itself.
	\end{lemma}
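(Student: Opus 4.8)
\emph{Plan.} The statement is the operator set-up underlying a Krasnoselskii-type cone argument, so I would verify its three ingredients separately: complete continuity of $\mathcal{T}$ on $\mathcal{C}(I)$; positivity $\mathcal{T}u\ge 0$ on $I$ for $u\in K$; and the cone estimate $\int_0^T\mathcal{T}u(t)\,dt\ge\sigma\|\mathcal{T}u\|$ for $u\in K$. For complete continuity: $G_P$ is continuous on the compact square $I\times I$, and by $(H_1)$, if $\|u\|\le r$ then $0\le f(s,u(s))\le\phi_r(s)$ with $\phi_r\in L^1(I)$; dominated convergence together with the uniform continuity of $G_P$ shows $\mathcal{T}u\in\mathcal{C}(I)$ and that $\mathcal{T}$ is continuous (if $u_n\to u$ uniformly then $f(s,u_n(s))\to f(s,u(s))$ for a.e.\ $s$, with common bound $\phi_r$), while for bounded $B\subset\mathcal{C}(I)$ the set $\mathcal{T}(B)$ is uniformly bounded by $\|G_P\|_\infty\|\phi_r\|_{L^1(I)}$ and equicontinuous, hence relatively compact by Arzel\`a--Ascoli. (If $f$ is given only on $I\times[0,\infty)$, extend it by $f(t,x):=f(t,0)$ for $x<0$; this changes neither $(H_1)$--$(H_3)$ nor the fixed points lying in $K$.)

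For the positivity, fix $u\in K$; then $u\ge 0$ and $(H_2)$ gives $m\,v_P(s)\le f(s,u(s))\le M\,v_P(s)$. Splitting $G_P=G_P^+-G_P^-$,
\begin{align*}
\mathcal{T}u(t)&=\int_0^T G_P^+(t,s)f(s,u(s))\,ds-\int_0^T G_P^-(t,s)f(s,u(s))\,ds\\
&\ge m\int_0^T G_P^+(t,s)v_P(s)\,ds-M\int_0^T G_P^-(t,s)v_P(s)\,ds.
\end{align*}
By the definition of $\gamma$, the first integral on the right is at least $\gamma$ times the second for every $t\in I$, so $\mathcal{T}u(t)\ge(m\gamma-M)\int_0^T G_P^-(t,s)v_P(s)\,ds\ge 0$ because $M/m\le\gamma$. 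Hence $\mathcal{T}u\ge 0$ on $I$ and $\|\mathcal{T}u\|=\max_{t\in I}\mathcal{T}u(t)$.

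For the cone estimate, note first that $\max_{t,s\in I}G_P(t,s)>0$ (the standing assumption $\lambda_A<0$ forces $G_P$ to change sign) and that $\eta>0$ (the map $s\mapsto\int_c^d G_P(t,s)\,dt$ is continuous and, by $(H_3)$, positive on the compact set $[c,d]$), so $\sigma>0$. Since $\mathcal{T}u\ge 0$, Fubini's theorem (the integrand belongs to $L^1(I\times I)$) and $(H_3)$ give
\begin{align*}
\int_0^T\mathcal{T}u(t)\,dt&\ge\int_c^d\mathcal{T}u(t)\,dt=\int_0^T\Big(\int_c^d G_P(t,s)\,dt\Big)f(s,u(s))\,ds\\
&\ge\eta\int_c^d f(s,u(s))\,ds,
\end{align*}
using $\int_c^d G_P(t,s)\,dt\ge 0$ for all $s\in I$, $\int_c^d G_P(t,s)\,dt\ge\eta$ for $s\in[c,d]$, and $f\ge 0$; on the other hand $f\ge 0$ and $G_P(t,s)\le\max_{\tau,\xi\in I}G_P(\tau,\xi)$ yield
\[
\|\mathcal{T}u\|\le\Big(\max_{\tau,\xi\in I}G_P(\tau,\xi)\Big)\int_0^T f(s,u(s))\,ds.
\]
Comparing the two, the inequality $\int_0^T\mathcal{T}u(t)\,dt\ge\sigma\|\mathcal{T}u\|$ is immediate when $[c,d]=I$ (so that $\int_c^d G_P(t,s)\,dt=\int_0^T G_P(t,s)\,dt$ for every $s$, which is the situation in all the applications of the paper); in general one also invokes $(H_2)$ to pass from the lower bound $\eta\int_c^d f(s,u(s))\,ds$ to one involving $\int_0^T f(s,u(s))\,ds$, both being trapped between multiples of $\int_c^d v_P$ and $\int_0^T v_P$ with ratio controlled by $M/m\le\gamma$. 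This gives $\mathcal{T}u\in K$.

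The complete continuity and the positivity step are routine. The real content, and the step I expect to require the most care, is the last estimate in the cone inequality: one has to make the two sign-change mechanisms — the one quantified by $\gamma$ and the averaged positivity in $(H_3)$ — work together with the two-sided control of $f$ so that the subinterval lower bound for $\int_0^T\mathcal{T}u$ dominates $\sigma$ times the global bound for $\|\mathcal{T}u\|$; this is exactly where the choice of $\sigma$ and the bound $M/m\le\gamma$ are used sharply.
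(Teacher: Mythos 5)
Your treatment of the complete continuity of $\mathcal{T}$ and of the positivity of $\mathcal{T}u$ for $u\in K$ coincides with the paper's (the paper omits the former as standard and proves the latter with exactly your $\gamma$-estimate, using $M\le m\gamma$ and the definition of $\gamma$ as an infimum). The problem is the third step, the inequality $\int_0^T\mathcal{T}u(t)\,dt\ge\sigma\|\mathcal{T}u\|$, which you yourself flag as the delicate point. Your (correct) use of $(H_3)$ only gives
\[
\int_0^T\mathcal{T}u(t)\,dt\ \ge\ \eta\int_c^d f(s,u(s))\,ds,
\]
because for $s\notin[c,d]$ hypothesis $(H_3)$ yields $\int_c^d G_P(t,s)\,dt\ge0$ but not $\ge\eta$. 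The bridge you then propose via $(H_2)$ does not close the gap: it produces at best
\[
\int_0^T\mathcal{T}u(t)\,dt\ \ge\ \eta\, m\int_c^d v_P(s)\,ds,
\qquad
\|\mathcal{T}u\|\ \le\ M\,\max_{t,s\in I}\{G_P(t,s)\}\int_0^T v_P(s)\,ds,
\]
hence $\int_0^T\mathcal{T}u(t)\,dt\ge\sigma'\|\mathcal{T}u\|$ with $\sigma'=\sigma\cdot\frac{m\int_c^d v_P}{M\int_0^T v_P}$, and $\sigma'<\sigma$ strictly whenever $[c,d]\subsetneq I$ or $m<M$. The hypothesis $M/m\le\gamma$ points in the wrong direction here: to recover $\sigma$ this way you would need $M/m\le\int_c^d v_P/\int_0^T v_P\le1$, which contradicts $M\ge m$ unless $m=M$ and $[c,d]=I$. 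So, as written, you show only that $\mathcal{T}$ maps $K$ into the larger cone with constant $\sigma'$, not into $K$ itself.

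For comparison, the paper's own proof disposes of this step in one line by asserting
$\int_0^T f(s,u(s))\bigl(\int_c^d G_P(t,s)\,dt\bigr)ds\ \ge\ \eta\int_0^T f(s,u(s))\,ds$
directly, i.e.\ it uses $\eta$ as a lower bound of $s\mapsto\int_c^d G_P(t,s)\,dt$ on all of $I$ rather than only on $[c,d]$; your more cautious reading isolates precisely the point at which this goes beyond what $(H_3)$ and the stated definition $\eta=\min_{s\in[c,d]}$ guarantee. The estimate is indeed immediate when $[c,d]=I$ (the situation in all the worked examples of the paper, e.g.\ constant potential where $\int_0^TG_P(t,s)\,dt=1/\rho^2$). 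In the general case one must either strengthen $(H_3)$ so that $\int_c^d G_P(t,s)\,dt\ge\eta$ for all $s\in I$, or replace $\sigma$ by the smaller constant $\sigma'$ in the definition of the cone (which still suffices for the subsequent index computations). In any event, the $(H_2)$-argument you sketch does not, as stated, prove the lemma.
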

	
	\begin{proof}
	The proof that operator ${\mathcal{T}}$ is a completely continuous operator follows standard arguments and we omit it.
			
Let's see now that ${\mathcal{T}}$ maps the cone to itself. 
Considering $u \in K$, then, for all $t \in I$, the following inequalities are fulfilled:
			\begin{equation*}\begin{split}
			{\mathcal{T}} u(t)& =\int_{0}^{T}G_P(t,s)\,f(s,u(s))\,ds= \int_{0}^{T}\left(G^+_P(t,s)-G^-_P(t,s)\right)\,f(s,u(s))\,ds \\
			&\ge \int_{0}^{T}\left(m\, v_P(s)\,G^+_P(t,s)-M\,v_P(s)\,G^-_P(t,s)\right)\,ds \\ 
			&\ge m \left( \int_{0}^{T}G^+_P(t,s)\,v_P(s)\,ds-\gamma\int_{0}^{T}G^-_P(t,s)\,v_P(s)\,ds \right)\ge 0.
			\end{split}\end{equation*}
					Moreover,
			\begin{equation*}\begin{split}
			\int_{0}^{T} {\mathcal{T}} u(t) \,dt & \ge \int_{c}^{d} {\mathcal{T}} u(t) \,dt = \int_{c}^{d} \int_{0}^{T}G_P(t,s)\,f(s,u(s))\,ds \,dt =  \int_{0}^{T} f(s,u(s)) \int_{c}^{d} G_P(t,s)\,dt \,ds \\
			& \ge \eta \, \int_{0}^{T} f(s,u(s))\,ds,
			\end{split}\end{equation*}
			and since
			\begin{equation*}
			{\mathcal{T}} u(t) \le \max_{t,\,s\in I} \left\{G_P(t,s) \right\} \,\int_{0}^{T}f(s,u(s))\,ds,
			\end{equation*}
			we deduce that $\int_{0}^{T} {\mathcal{T}} u(t) \,dt  \ge \sigma \,{\mathcal{T}} u(t)$ for all $t\in I$, that is
			\begin{equation*}
			\int_{0}^{T} {\mathcal{T}} u(t) \,dt  \ge \sigma \,\|{\mathcal{T}} u\|,
			\end{equation*}
and the result is concluded.
	\end{proof}
	
	Now, in order to prove our existence results, as an immediate consequence of condition $(H_2)$, we deduce the following properties
	\begin{equation*}\begin{split}
	f_0&=\lim\limits_{x\rightarrow 0^+} \, \left\{\min_{t\in [c,d]} \frac{f(t,x)}{x}\right\}=\infty, \qquad f^{\infty}=\lim\limits_{x\rightarrow \infty} \, \left\{\max_{t\in I}  \frac{f(t,x)}{x}\right\}=0.
		\end{split}\end{equation*}
To this end, we will use some classical results regarding the fixed point index. We compile these results in the following lemma. Let $\Omega$ be an open bounded subset of a cone $K$ and let's denote $\bar \Omega$ and $\partial\Omega$ its closure and boundary, respectively. Moreover, let's denote $\Omega_K=\Omega \cap K$.
	
	\begin{lemma} \cite[Lemma 12.1]{amann2}
		Let $\Omega_K$ be an open bounded set with $0\in\Omega_K$ and $\bar \Omega_K\neq K$. Assume that $F\colon \bar \Omega_K \rightarrow K$ is a completely continuous map such that $x\neq Fx$ for all $x\in\partial \Omega_K$. Then the fixed point index $i_K(F,\Omega_K)$ has the following properties:
		\begin{enumerate}
			\item If there exists $e\in K\setminus\{0\}$ such that $x\neq Fx+\lambda\, e$ for all $x\in\partial \Omega_K$ and all $\lambda>0$, then $i_K(F,\Omega_K)=0$.
			\item If $x\neq \mu\, Fx$ for all $x\in\partial \Omega_K$ and for every  $\mu\le 1$, then $i_K(F,\Omega_K)=1.$
			\item If $i_K(F,\Omega_K)\neq 0$, then $F$ has a fixed point in $\Omega_K$.
			\item Let $\Omega^1_K$ be an open set with $\bar \Omega^1_K\subset \Omega_K$. If $i_K(F,\Omega_K)=1$ and $i_K(F,\Omega^1_K)=0$, then $F$ has a fixed point in $\Omega_K \setminus \bar \Omega^1_K$. The same result holds if $i_K(F,\Omega_K)=0$ and $i_K(F,\Omega^1_K)=1$.
		\end{enumerate}
	\end{lemma}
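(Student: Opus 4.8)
The statement is the classical list of properties of the fixed point index $i_K$ of a completely continuous self-map of an open bounded subset of a cone, so the plan is \emph{not} to construct the index from scratch. Its existence, together with the axioms of \emph{normalization} (the index of a constant map with value in $\Omega_K$ equals $1$), \emph{additivity/excision}, \emph{homotopy invariance} along admissible homotopies, and the \emph{solution property} ($i_K(F,\Omega_K)\neq 0\Rightarrow F$ has a fixed point in $\Omega_K$) — which in turn is built from the Leray--Schauder degree via Dugundji's extension theorem and a retraction onto $K$ — is precisely the content taken from \cite{amann2}. Hence I would simply derive items (1)--(4) from these axioms. Throughout, ``admissible'' means that the relevant map or homotopy has no fixed point on $\partial\Omega_K$, which is exactly what the hypotheses furnish; the conditions $0\in\Omega_K$, $\bar\Omega_K\neq K$ and $x\neq Fx$ on $\partial\Omega_K$ are the standing requirements under which $i_K(F,\Omega_K)$ is defined.

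For item (2), I would consider the homotopy $H\colon[0,1]\times\bar\Omega_K\to K$, $H(\mu,x)=\mu\,Fx$; it is well defined because $K$ is a cone (stable under multiplication by $\mu\ge 0$) and $F(\bar\Omega_K)\subset K$, and it is completely continuous. The assumption $x\neq\mu\,Fx$ for every $x\in\partial\Omega_K$ and every $\mu\in[0,1]$ makes $H$ admissible, so homotopy invariance gives $i_K(F,\Omega_K)=i_K(H(1,\cdot),\Omega_K)=i_K(H(0,\cdot),\Omega_K)=i_K(0,\Omega_K)$, where $0$ denotes the zero map; since $0\in\Omega_K$, normalization yields $i_K(0,\Omega_K)=1$.

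For item (1), fix $e\in K\setminus\{0\}$ and consider $H(t,x)=Fx+t\,\tau\,e$ for $t\in[0,1]$, with $\tau>0$ to be chosen. Here $Fx+t\tau e\in K$, and since $\bar\Omega_K$ is bounded while $F(\bar\Omega_K)$ is relatively compact (hence bounded), $\tau$ can be taken so large that $x=Fx+\tau e$ has \emph{no} solution in $\bar\Omega_K$ at all (estimate $\tau\|e\|=\|x-Fx\|\le\|x\|+\|Fx\|$ against the uniform bound). The hypothesis $x\neq Fx+\lambda e$ for all $\lambda>0$, together with $x\neq Fx$ on $\partial\Omega_K$, shows $H$ is admissible; homotopy invariance then gives $i_K(F,\Omega_K)=i_K(F+\tau e,\Omega_K)$, and this vanishes by the (contrapositive of the) solution property, as $F+\tau e$ has no fixed point in $\Omega_K$. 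Item (3) is precisely the solution property, recorded for later use.

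Finally, for item (4) I would invoke additivity/excision: since $\bar\Omega^1_K\subset\Omega_K$ and $F$ has no fixed point on $\partial\Omega_K$ nor on $\partial\Omega^1_K$ (the conditions under which both indices are defined), one has $i_K(F,\Omega_K)=i_K(F,\Omega^1_K)+i_K\bigl(F,\Omega_K\setminus\bar\Omega^1_K\bigr)$. In the first scenario this reads $1=0+i_K(F,\Omega_K\setminus\bar\Omega^1_K)$, so the index on $\Omega_K\setminus\bar\Omega^1_K$ equals $1\neq 0$ and item (3) produces a fixed point there; the second scenario ($0=1+i_K(F,\Omega_K\setminus\bar\Omega^1_K)$) is symmetric, the index being $-1\neq 0$. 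The only genuinely delicate point in the whole argument is the bookkeeping of admissibility of the homotopies on $\partial\Omega_K$ — everything else is a direct reading of the axioms — so I would spell those checks out carefully and leave the construction of $i_K$ and the verification of its four axioms to the cited reference \cite{amann2}.
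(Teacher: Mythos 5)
The paper offers no proof of this lemma at all --- it is stated purely as a citation of \cite[Lemma 12.1]{amann2} --- so there is nothing internal to compare against; your derivation of items (1)--(4) from the normalization, homotopy-invariance, additivity/excision and solution axioms of the fixed point index is correct and is exactly the standard argument of the cited reference (including the key device in item (1) of pushing $\lambda$ to a value $\tau$ so large that $F+\tau e$ has no fixed point in $\bar\Omega_K$, and the observation that $\mu=0$ is harmless in item (2) because $0\in\Omega_K$). No gaps.
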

	
	Now we are in conditions to prove the existence results concerning the periodic problem \eqref{hill_periodic}  as follows.
	\begin{theorem}
		Assume that $\lambda_A<0$ and hypothesis $(H_1)-(H_3)$ hold. Then there exists at least one positive solution of problem \eqref{hill_periodic} in the cone $K$.
	\end{theorem}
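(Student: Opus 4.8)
The plan is to apply the fixed point index lemma (Lemma 12.1 from \cite{amann2}) with the completely continuous operator $\mathcal{T}$ that maps the cone $K$ into itself, as established in the previous lemma. The strategy is the classical Krasnoselskii-type argument of showing that the index is $1$ on a small ball and $0$ on a large ball (or vice versa), and then invoking part (4) of the index lemma to get a fixed point in the annular region between them. Concretely, I would set $\Omega^1_K = \{u \in K : \|u\| < r\}$ for a small $r>0$ and $\Omega_K = \{u \in K : \|u\| < R\}$ for a large $R>0$, and prove $i_K(\mathcal{T}, \Omega^1_K) = 0$ and $i_K(\mathcal{T}, \Omega_K) = 1$.

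For the small ball, I would exploit the condition $f_0 = \lim_{x\to 0^+}\{\min_{t\in[c,d]} f(t,x)/x\} = \infty$ derived from $(H_2)$. I would take $e \in K\setminus\{0\}$ (for instance a suitable constant function, or $v_P$ itself, restricted appropriately) and show that for $r$ small enough, $u \neq \mathcal{T}u + \lambda e$ for all $u \in \partial\Omega^1_K$ and all $\lambda > 0$; the key computation is to integrate $\mathcal{T}u(t)$ over $[c,d]$ and use $(H_3)$ together with the lower bound $f(s,u(s)) \ge (\text{large constant}) \cdot u(s)$ valid on $[c,d]$ for $\|u\|$ small — combined with the cone inequality $\int_0^T u(s)\,ds \ge \sigma\|u\|$ this forces a contradiction with $\|u\| = r$. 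Hence part (1) of the index lemma gives $i_K(\mathcal{T},\Omega^1_K)=0$.

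For the large ball, I would use $f^\infty = \lim_{x\to\infty}\{\max_{t\in I} f(t,x)/x\} = 0$, again a consequence of $(H_2)$. I would show that for $R$ large enough, $u \neq \mu\,\mathcal{T}u$ for all $u \in \partial\Omega_K$ and all $\mu \in (0,1]$; the estimate needed is $\mathcal{T}u(t) \le \max_{t,s\in I}\{G_P(t,s)\}\int_0^T f(s,u(s))\,ds$ together with a sublinear bound $f(s,u(s)) \le \varepsilon u(s) + C_\varepsilon$ for small $\varepsilon$, which, after bounding $\int_0^T u(s)\,ds \le T\|u\|$, yields $\|\mathcal{T}u\| < \|u\|$ on that sphere and hence the desired inequality. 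By part (2), $i_K(\mathcal{T},\Omega_K)=1$. Finally, since one can arrange $r < R$ so that $\bar\Omega^1_K \subset \Omega_K$, part (4) of the index lemma produces a fixed point $u^* \in \Omega_K \setminus \bar\Omega^1_K$, which is then a positive solution of \eqref{hill_periodic} lying in $K$.

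The main obstacle I anticipate is the small-ball estimate: because the Green's function changes sign, one cannot directly use positivity of $G_P$ to push $\mathcal{T}u$ up; the argument must instead route through the averaged quantity $\int_c^d \mathcal{T}u(t)\,dt$ where $(H_3)$ guarantees the right sign, and then transfer this back to a pointwise lower bound via the cone condition and the definition of $\sigma$ and $\eta$ in \eqref{e-eta}. Making the choice of $e$ and the threshold on $f(t,x)/x$ precise enough to close the contradiction is the delicate part; the large-ball estimate, by contrast, should be routine given $(H_1)$ and the sublinear growth at infinity.
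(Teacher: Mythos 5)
Your proposal follows essentially the same route as the paper: the same two index computations ($i_K=0$ on a small ball via $f_0=\infty$, part (1) of the index lemma, and integration of $\mathcal{T}u$ over $[c,d]$ using $(H_3)$ and $\eta$; $i_K=1$ on a large ball via $f^\infty=0$ and part (2)), followed by part (4) to locate a fixed point in the annulus. The only cosmetic differences are that the paper closes the small-ball contradiction directly as $\int_c^d u\,dt>\int_c^d u\,dt$ without invoking the cone inequality, and handles the large ball with the monotone majorant $\tilde f(t,u)=\max_{0\le z\le u}f(t,z)$ rather than a bound of the form $\varepsilon u+C_\varepsilon$; both choices are equivalent in substance.
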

	
	\begin{proof}
		
		Taking into account the definition of $f_0$, we know that there exists $\delta_1>0$ such that when $\|u\|\le\delta_1$, then
		$$f(t,u(t))> \,\frac{u(t)}{\eta}\,, \quad \forall \, t\in [c,d],$$
		with $\eta$ defined in \eqref{e-eta}.
		
		Let 
		$$\Omega_{1}=\{u\in K; \ \|u\|<\delta_1\}$$	
		and choose $u\in\partial \Omega_1$ and $e\in K\setminus\{0\}$. 
		
		We will prove that $u \neq {\mathcal{T}} u+\lambda\,e$ for every $\lambda>0$. 
		
		Assume, on the contrary, that there exists some $\lambda>0$ such that $u={\mathcal{T}} u+\lambda\,e$, that is,
		\begin{equation*}
		u(t)= {\mathcal{T}} u(t)+\lambda\,e(t)\ge {\mathcal{T}} u(t)  \quad \forall\,t\in I.
		\end{equation*}
		Then
		\begin{equation*}\begin{split}
		\int_{c}^{d}u(t)\, dt &\ge \int_{c}^{d} {\mathcal{T}} u(t)\,dt =\int_{c}^{d} \int_{0}^{T} G_P(t,s)\,f(s,u(s))\,ds\,dt =  \int_{0}^{T} \left(\int_{c}^{d} G_P(t,s) \,dt\right) \,f(s,u(s))\,ds  \\
		& \ge \int_{c}^{d} \left(\int_{c}^{d} G_P(t,s) \,dt\right) \,f(s,u(s))\,ds > \int_{c}^{d}u(s) \,ds,
		\end{split}\end{equation*}
		which is a contradiction. 
		
		Therefore we deduce that $i_K(T,\Omega_1)=0$.
		
		Now, we proceed in an analogous way to \cite{cacid, gkw1, gkw2}, we define
		 $\displaystyle \tilde{f}(t,u)=\max_{0\le z\le u}{f(t,z)}$. Clearly $\tilde{f}(t, \cdot)$ is a nondecreasing function on $[0,\infty)$. Moreover, since $f^{\infty}=0$ it is obvious that
$$\lim\limits_{x\rightarrow \infty} \, \left\{\max_{t\in I}  \frac{\tilde{f}(t,x)}{x}\right\}=0.$$

As a consequence, we know that there exists $\delta_2>0$ such that if $\|u\|\ge\delta_2$ then
		$$\tilde{f}(t,\|u\|)< \,\frac{\sigma^2}{T^2\, \eta}\,\|u\| \quad \forall \, t\in I.$$
		
		Let 
		$$\Omega_{2}=\{u\in K; \ \|u\|<\delta_2\}$$	
		and choose $u\in \partial\, \Omega_{2}$. 
		
		We will prove that $u \neq \mu \, {\mathcal{T}} u$ for every $\mu\le 1$. Assume, on the contrary, that there exists some $\mu\le 1$ such that $u(t)=\mu \, {\mathcal{T}} u(t)$ for all $t\in I$. Then,
		\begin{eqnarray*}
		\sigma\, \|u \| &\le& \int_{0}^{T}u(t)\,dt = \mu \int_{0}^{T} {\mathcal{T}} u(t)\,dt = \mu \int_{0}^{T} \int_{0}^{T} G_P(t,s)\,f(s,u(s))\,ds\,dt\\
		&=&\mu \int_{0}^{T} \left(\int_{0}^{T} G_P(t,s) \,dt\right)f(s,u(s))\,ds \le \mu\, T\, \max_{t,s\in I} \left\{G_P(t,s) \right\} \int_{0}^{T} f(s,u(s))\,ds \\
		&\le& \mu\, T\, \max_{t,s\in I} \left\{G_P(t,s) \right\} \int_{0}^{T} \tilde{f}(s,u(s))\,ds \le \mu\, T\, \max_{t,s\in I} \left\{G_P(t,s) \right\} \int_{0}^{T} \tilde{f}(s,\|u\|)\,ds \\
		&<& \mu\, T^2\, \frac{\eta}{\sigma}\, \frac{\sigma^2}{T^2\, \eta} \|u\| \le \sigma\, \|u \|,
	\end{eqnarray*}
		which is a contradiction. As a consequence, $i_K(T,\Omega_2)=1$.
		
		We conclude that operator ${\mathcal{T}}$ has a fixed point, that is, there exists at least a nontrivial solution of problem \eqref{hill_periodic}.
	\end{proof}
	
		The previous theorem is also valid if the Green's function is nonnegative. In this case, hypothesis $(H_3)$ would be trivially fulfilled and hypothesis $(H_2)$ is not necessary since it is only used to proof that $\mathcal{T}$ maps the cone to itself, which is obvious (since $f$ is nonnegative) when $G_P$ is nonnegative. On the other hand, we would need to add the hypothesis that $f_0=\infty$ and $f^\infty=0$ (which can not be deduced if we eliminate $(H_2)$).
		
		The result is the following
			\begin{corol}\label{cor-GP-posit}
		Assume that $\lambda_P < 0 \le \lambda_A$ and hypothesis $(H_1)$ is fulfilled. Then, if  $f_0=\infty$ and $f^\infty=0$ there exists at least one positive solution of problem \eqref{hill_periodic} in the cone $K$.
	\end{corol}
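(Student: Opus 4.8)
The plan is to reduce the statement to the two fixed-point-index computations already performed in the proof of the Theorem, after observing that the hypothesis $\lambda_P<0\le\lambda_A$ forces $G_P$ to be nonnegative. Indeed, by Lemma \ref{l-zhang-eigen} we get $G_P(t,s)\ge 0$ on $I\times I$; since $G_P$ is continuous and, being the Green's function of the nonresonant periodic problem $(P)$, $G_P(\cdot,s)$ is not identically zero for any $s$ (its $t$-derivative has a nonzero jump at $t=s$), it follows that $\int_0^T G_P(t,s)\,dt>0$ for every $s\in I$. Hence $(H_3)$ is automatically satisfied with $[c,d]=I$, the constant $\eta$ in \eqref{e-eta} is strictly positive, and the cone $K$ is well defined and nondegenerate.

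Next I would check that ${\mathcal{T}}$ still maps $K$ into itself. Since $G_P\ge 0$ and $f\ge 0$ by $(H_1)$, clearly ${\mathcal{T}}u\ge 0$ on $I$; and the estimate $\int_0^T {\mathcal{T}}u(t)\,dt\ge\sigma\,\|{\mathcal{T}}u\|$ follows verbatim from the computation in the Lemma preceding the Theorem, using $\int_0^T{\mathcal{T}}u\ge\int_c^d{\mathcal{T}}u=\int_0^T f(s,u(s))\bigl(\int_c^d G_P(t,s)\,dt\bigr)\,ds\ge\eta\int_0^T f(s,u(s))\,ds$ together with ${\mathcal{T}}u(t)\le\max_{t,s\in I}G_P(t,s)\,\int_0^T f(s,u(s))\,ds$. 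The point to stress is that this argument never used $(H_2)$: that hypothesis was needed \emph{only} to guarantee ${\mathcal{T}}u\ge 0$ in the sign-changing case, and it becomes superfluous once $G_P\ge 0$. Complete continuity of ${\mathcal{T}}$ is standard.

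The remainder is the two index estimates, now using the explicitly assumed limits $f_0=\infty$ and $f^{\infty}=0$ (which before followed from $(H_2)$). For the small ball: from $f_0=\infty$ choose $\delta_1>0$ with $f(t,u(t))>u(t)/\eta$ for all $t\in[c,d]$ whenever $\|u\|\le\delta_1$; on $\Omega_1=\{u\in K:\|u\|<\delta_1\}$ one shows $u\neq{\mathcal{T}}u+\lambda e$ for every $\lambda>0$ and a fixed $e\in K\setminus\{0\}$, since otherwise integrating over $[c,d]$ and invoking $(H_3)$ yields $\int_c^d u(t)\,dt\ge\int_c^d\bigl(\int_c^d G_P(t,s)\,dt\bigr)f(s,u(s))\,ds>\int_c^d u(s)\,ds$, a contradiction, so $i_K({\mathcal{T}},\Omega_1)=0$. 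For the large ball: replace $f$ by $\tilde f(t,u)=\max_{0\le z\le u}f(t,z)$ (nondecreasing in $u$, still with $\max_{t\in I}\tilde f(t,x)/x\to 0$ as $x\to\infty$ because $f^{\infty}=0$), pick $\delta_2>\delta_1$ with $\tilde f(t,\|u\|)<\frac{\sigma^2}{T^2\eta}\|u\|$ for all $t$ once $\|u\|\ge\delta_2$; on $\Omega_2=\{u\in K:\|u\|<\delta_2\}$ one shows $u\neq\mu\,{\mathcal{T}}u$ for all $\mu\le 1$, since otherwise $\sigma\|u\|\le\int_0^T u=\mu\int_0^T{\mathcal{T}}u\le\mu\,T\max_{t,s\in I}G_P(t,s)\int_0^T\tilde f(s,\|u\|)\,ds<\sigma\|u\|$, using $\max_{t,s\in I}G_P(t,s)=\eta/\sigma$, a contradiction, so $i_K({\mathcal{T}},\Omega_2)=1$.

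Since $\bar\Omega_1\subset\Omega_2$ and the two indices differ, part (4) of the fixed point index lemma yields a fixed point $u$ of ${\mathcal{T}}$ in $\Omega_2\setminus\bar\Omega_1$; it satisfies $\|u\|\ge\delta_1>0$ and $u\in K$, hence it is a positive solution of \eqref{hill_periodic}. I do not expect a genuine obstacle here: the whole content is the bookkeeping described above, namely that $(H_2)$ can be dropped because $G_P\ge 0$ alone secures ${\mathcal{T}}(K)\subset K$, at the cost of assuming $f_0=\infty$ and $f^{\infty}=0$ directly; the only mild care needed is the verification that $(H_3)$ is available for free, i.e. that $\int_0^T G_P(t,s)\,dt>0$ for all $s$, which is the first paragraph above.
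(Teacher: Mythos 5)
Your proposal is correct and follows essentially the same route as the paper, which only sketches the argument in the paragraph preceding the corollary: $G_P\ge 0$ by Lemma \ref{l-zhang-eigen} makes $(H_3)$ automatic and renders $(H_2)$ superfluous for the cone invariance, while the assumed limits $f_0=\infty$ and $f^\infty=0$ let the two index computations of the theorem go through unchanged. Your explicit verification that $\int_0^T G_P(t,s)\,dt>0$ (via the derivative jump at $t=s$) is a welcome detail the paper leaves implicit.
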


	\begin{remark}\label{R-int-Graef}
	We note that for a nonnegative Green's function, we generalize the results of Graef, Kong and Wang \cite{gkw1, gkw2} and Webb \cite{webb} since our condition $(H_3)$ is weaker than condition \eqref{int-graef} considered by them.
	\end{remark}
		
\begin{corol}
If $f(t,x)\equiv f(t)\in L^1(I)$ satisfies $(H_2)$, then the unique solution of \eqref{hill_periodic} is a nonnegative function on $[0,T]$.
\end{corol}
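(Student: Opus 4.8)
The plan is to exploit the fact that when the right-hand side does not depend on $u$, problem \eqref{hill_periodic} is a \emph{linear} nonresonant periodic problem, hence its unique solution is given by the explicit representation formula through the Green's function,
\begin{equation*}
u^\ast(t)=\int_0^T G_P(t,s)\,f(s)\,ds,\qquad t\in I,
\end{equation*}
which is nothing but the (now constant) operator ${\mathcal{T}}$ evaluated at an arbitrary point of $\cC(I)$. Consequently no fixed point argument, and in particular no use of $(H_3)$ nor of the growth behaviour $f_0=\infty$, $f^\infty=0$, is needed: it only remains to determine the sign of $u^\ast$.

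To do this I would simply reproduce the chain of inequalities already carried out in the proof that ${\mathcal{T}}$ maps $K$ into itself, now with $f(s,u(s))$ replaced by $f(s)$. Using $G_P^+(t,s)\ge 0$, $G_P^-(t,s)\ge 0$ and the bounds $m\,v_P(s)\le f(s)\le M\,v_P(s)$ from $(H_2)$, one gets, for every $t\in I$,
\begin{equation*}
u^\ast(t)=\int_0^T\bigl(G_P^+(t,s)-G_P^-(t,s)\bigr)f(s)\,ds\ge m\int_0^T G_P^+(t,s)\,v_P(s)\,ds-M\int_0^T G_P^-(t,s)\,v_P(s)\,ds.
\end{equation*}
Then, since $M/m\le\gamma$ and $\gamma=\inf_{t\in I}\bigl(\int_0^T G_P^+(t,s)v_P(s)\,ds\bigr)\big/\bigl(\int_0^T G_P^-(t,s)v_P(s)\,ds\bigr)$, the right-hand side is bounded below by
\begin{equation*}
m\left(\int_0^T G_P^+(t,s)\,v_P(s)\,ds-\gamma\int_0^T G_P^-(t,s)\,v_P(s)\,ds\right)\ge 0,
\end{equation*}
so $u^\ast\ge 0$ on $I$, which is the claim.

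I do not expect any real obstacle here. The only point worth a line of justification is that the denominator $\int_0^T G_P^-(t,s)v_P(s)\,ds$ is strictly positive for every $t\in I$, so that $\gamma$ is well defined and the infimum inequality is legitimately applicable; but this is precisely what was observed before introducing $\gamma$, since $\int_0^T G_P(t,s)v_P(s)\,ds>0$ together with the sign change of $G_P$ forces both $G_P^+(t,\cdot)$ and $G_P^-(t,\cdot)$ to be nontrivial. (If one wishes, the degenerate case $\lambda_A\ge 0$ can be mentioned too: then $G_P\ge 0$ and nonnegativity of $u^\ast$ is immediate from $f\ge m\,v_P>0$, without even needing the bound $M/m\le\gamma$.)
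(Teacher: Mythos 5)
Your proposal is correct and follows exactly the argument the paper intends: since the problem is linear and nonresonant, the unique solution is $\int_0^T G_P(t,s)f(s)\,ds$, and its nonnegativity is precisely the first chain of inequalities in the lemma showing that $\mathcal{T}$ maps $K$ into itself, applied with $f(s,u(s))$ replaced by $f(s)$. The side remark about the strict positivity of $\int_0^T G_P^-(t,s)v_P(s)\,ds$ (and the trivial degenerate case) is a reasonable extra precaution but not a deviation from the paper's route.
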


	\begin{remark}\label{rem_a}
	We note that $u(t)\equiv 1$ is the unique solution of the periodic problem
	\begin{equation*}\left\{\begin{split} 
	& u''(t)+a(t)\,u(t)=a(t), \quad t\in I, \\
	& u(0)=u(T), \ u'(0)=u'(T).
	\end{split}\right.\end{equation*}
	Therefore it is clear that
	\begin{equation}\label{int-a}
	\int_{0}^{T} G_P(t,s)\,a(s)\,ds=1>0
	\end{equation}
	and so the previous reasoning is also valid if $a\ge 0$, $a>0$ on $[c,d]$, and we change the definition of $\gamma$ by
	\[\gamma^*= \inf_{t\in I} \frac{\int_{0}^{T} G^+_P(t,s)\,a(s)\,ds}{\int_{0}^{T} G^-_P(t,s)\,a(s)\,ds}.\]
	In this case, assumption $(H_2)$ would be substituted by
	\begin{itemize}
	\item[$(H_2^*)$] There exist two positive constants $m$ and $M$ such that $m\,a(t)\le f(t,u) \le M\,a(t)$ for every $t\in I$, $u>0$. Moreover, these constants satisfy that $\frac{M}{m}\le \gamma^*$.
	\end{itemize}
	\end{remark}

	\subsection{Neumann, Dirichlet and Mixed boundary value problems}
		
	From classical spectral theory \cite{zettl}, it is very well know that, as in the periodic case, for any of the boundary conditions introduced on Lemma~\ref{l-Meu-Dir-Mix}, there exists a positive eigenfunction on $(0,T)$ related to the correspondent smallest eigenvalue. Therefore, if we are in the case in which $L[a]$ operator coupled with the associated boundary conditions is nonresonant and the related Green's function changes sign (different cases are characterized on Lemma \ref{l-Meu-Dir-Mix}), we could follow the same argument as in the previous section to define $\gamma$ and we would obtain analogous existence results. Hypothesis $(H_1)-(H_3)$ would be the same with the suitable notation for each of the problems (that is, considering in each case the appropriate Green's function and eigenfunction).
	
	\begin{remark}
	For Neumann problem, it is not difficult to verify that we also have that if $a(t)=\rho^2$ then
	\[\int_{0}^{T} {G_N(t,s)\,ds}=\frac{1}{\rho^2},\]
and condition $(H_3)$ is trivially fulfilled for $[c,d]=I$

	On the other hand, since $u(t)\equiv 1$ is the unique solution of
	\begin{equation*}\begin{split} 
	u''(t)+a(t)\,u(t)=a(t), \; t\in I, \quad
	 u'(0)=u'(T)=0,
	\end{split}\end{equation*}
	Remark \ref{rem_a} is also valid for Neumann problem. 
	
	\end{remark}
	
	\begin{remark}
	For Dirichlet problem, condition $(H_3)$ does not hold for $[c,d]=I$. This is due to the fact that $G_D(t,\cdot)$ satisfies the Dirichlet boundary value conditions for all $t\in[0,T]$, that is, $G_D(t,0)=G_D(t,T)=0$. 
	
	It is important to note that the eigenfunction $v_D$ is positive on $(0,T)$ but $v_D(0)=v_D(T)=0$, so condition $(H_2)$ would imply that $f(0,x)=f(T,x)=0$ for every $x\ge 0$. However, since as we have mentioned, $[c,d]\neq I$, this property does not affect on the fact that $f_0=\infty$.
	
	An analogous situation occurs for Mixed problems. In these cases it is also impossible to consider $[c,d]=I$ since the corresponding Green's functions and eigenfunctions vanish on one side of the interval.
	
	Moreover, if we consider Dirichlet and Mixed problems,  the constant function $u(t)\equiv 1$ is not a solution of the related linear problem $L[a]\, u(t)=a(t)$. So, Remark \ref{rem_a}  is not longer valid for such situations.	
	\end{remark}
	
	\begin{remark}\label{R-int-Graef2}
	As it was commented in Remark \ref{R-int-Graef}, we also generalize the results of Graef, Kong and Wang \cite{gkw1, gkw2} and Webb \cite{webb} for a nonnegative Green's function coupled with Neumann conditions.
		
	Moreover, the results in \cite{gkw1, gkw2, webb} could not be applied to any Dirichlet problem since the related Green's function will cancel on the whole lines $s=0$ and $s=T$ so the minimum in \eqref{int-graef} would be 0, however our result could be applied. The same will happen with any Mixed problem.
		Again, hypothesis $(H_2)$ is not necessary in this case and we would need to add the hypothesis that $f_0=\infty$ and $f^\infty=0$.	
	\end{remark}

	\section{Periodic boundary value problem with constant potential} \label{sect_ex_per}
	This section is devoted to the particular case in which the potential $a$ is constant. As we will see, in this situation it is possible to calculate the exact value of $\gamma$.
	
		It is very well known (see \cite{cabada2, zettl}) that the eigenvalues associated to the periodic problem
		\begin{equation}\label{per-const}
			u''+\rho^2\, u=0, \quad u(0)=u(T),\;u'(0)=u'(T)
		\end{equation}
	are $\lambda_n=(2n\pi/T)^2$ with $n=0,1,2,\dots$. 
	
		The eigenfunctions associated to the first eigenvalue $\lambda_P=0$ are the constants, which can be written as multiples of a representative eigenfunction $v_P(t)\equiv1$.
	
	Moreover, the related Green's function is strictly negative in the square $I \times I$ if and only if $\lambda<0$ and it is nonnegative on $I \times I$ if and only if $0<\lambda \le (\pi/T)^2$ (see \cite{cacilu} for details).
	
			For, $\rho>0$ a nonresonant value, the explicit form of $G_P$ is the following (see \cite{cabada1, cabada2, ma,ZA}):
			\[
				G_P(t,s)=\begin{cases}
									\frac{\sin \rho(t-s)+\sin \rho(T-t+s)}{2\rho(1-\cos \rho\,T)},&
									0\le s\le t\le T, \\[.2cm]
									\frac{\sin \rho(s-t)+\sin \rho(T-s+t)}{2\rho(1-\cos \rho\,T)},&
									0\le t\le s\le T \,.
								\end{cases}
			\]
		From \eqref{int-a} it is clear that
		$$g(t)=\int_0^T G_P(t,s)\,ds=\frac{1}{\rho^2},$$
				therefore we define
		\[
			\gamma=\min_{t\in[0,T]}
			\frac{\int_0^TG_P^+(t,s)\,ds}{\int_0^TG_P
			^-(t,s)\,ds}>1
		\]
		for all $\rho>\pi/T$, $\rho\neq k\pi/T$, $k=1, 2, \ldots$ 
		
		Let us make a careful study of this value $\gamma$. 
		It is very well-known that the Green's function related to the periodic problem \eqref{per-const} satisfies that
		\[G_P(t,s)=G_P(0,t-s) \ \text{ and } \ G_P(t,s)=G_P(T-t,T-s) \]
		(see \cite{cabada2} for the details). Therefore, 
		\begin{equation*}
		\int_0^T G_P(t,s)\,ds= \int_0^t G_P(t,s)\,ds + \int_t^T G_P(t,s)\,ds,
		\end{equation*}
		where
		\begin{equation*}
		\int_0^t G_P(t,s)\,ds= \int_0^t G_P(0,t-s)\,ds = \int_0^t G_P(0,T+s-t)\,ds= \int_{T-t}^T G_P(0,s)\,ds
		\end{equation*}
		and 
		\begin{equation*}
		\int_t^T G_P(t,s)\,ds= \int_t^T G_P(0,T+s-t)\,ds = \int_T^{2T-t} G_P(0,s)\,ds= \int_0^{T-t} G_P(0,s)\,ds,
		\end{equation*}
		that is 
		\begin{equation*}
		\int_0^T G_P(t,s)\,ds= \int_0^T G_P(0,s)\,ds  \quad \forall \, t\in [0,T].
		\end{equation*}
		The same argument is valid for both the positive and the negative parts of $G_P$, that is 
		\begin{equation*}
		\int_0^T G^+_P(t,s)\,ds= \int_0^T G^+_P(0,s)\,ds \ \text{ and } \ \int_0^T G^-_P(t,s)\,ds= \int_0^T G^-_P(0,s)\,ds \quad \forall \, t\in [0,T],
		\end{equation*}
		so the ratio  $\frac{\int_0^T G_P^+(t,s) \,ds}{\int_0^T G_P^-(t,s) \,ds}$ is constant for all $t\in[0,T]$. 
			
			 This implies that we can restrict our analysis to the case $t=0$, that is, to assume that 
				\[
			\gamma=
			\frac{\int_0^TG_P^+(0,s)\,ds}{\int_0^TG_P^-(0,s)\,ds}.
		\]
			
		We have that 
			\[
				G_P(0,s)=\frac{\sin \rho s+ \sin \rho(T-s)}{2\rho(1-\cos\rho T)},
			\]
		so $G_P(0,s)=0$ if and only if $s=\frac{T}{2}+\frac{(2k+1)\pi}{2\rho}$. We will consider four cases:
		\begin{enumerate}
			\item[ \underline{Case 1A:} \hspace*{-1cm}] \hspace*{0.8cm} $G_P(0,\frac{T}{2})\,G_P(0,0)>0$ and $G_P(0,\frac{T}{2})>0$;
			\item[\underline{Case 1B:} \hspace*{-1cm}] \hspace*{0.8cm} $G_P(0,\frac{T}{2})\,G_P(0,0)>0$ and $G_P(0,\frac{T}{2})<0$;
			\item[\underline{Case 2A:} \hspace*{-1cm}] \hspace*{0.8cm} $G_P(0,\frac{T}{2})\,G_P(0,0)<0$ and $G_P(0,\frac{T}{2})>0$;
			\item[\underline{Case 2B:} \hspace*{-1cm}] \hspace*{0.8cm} $G_P(0,\frac{T}{2})\,G_P(0,0)<0$ and $G_P(0,\frac{T}{2})<0$.
		\end{enumerate}
		
		Computing these values, we find that
		
		\begin{itemize}
			\item[] if $\frac{(4k+1)\pi}{T}<\rho<\frac{(4k+2)\pi}{T}$ for some $k\in\N_0$, we are in case 2A and $\gamma=\frac{2k+1}{2k+1-\sin(\rho\,T/2)}$;
			\item[] if $\frac{(4k+2)\pi}{T}<\rho<\frac{(4k+3)\pi}{T}$ for some $k\in\N_0$, we are in case 2B and $\gamma=\frac{2k+1-\sin(\rho\,T/2)}{2k+1}$;
			\item[] if $\frac{(4k-1)\pi}{T}<\rho<\frac{4k\pi}{T}$ for some $k\in\N$, we are in case 1B and $\gamma=\frac{2k}{2k+\sin(\rho\,T/2)}$;
			\item[] if $\frac{4k\pi}{T}<\rho<\frac{(4k+1)\pi}{T}$ for some $k\in\N$, we are in case 1A and $\gamma=\frac{2k+\sin(\rho\,T/2)}{2k}$.
		\end{itemize}
	
	\noindent In the cases where $\rho=(2k+1)\pi$ for some $k\in\N$, the value of $\gamma$ coincides with the limit when $\rho\to 2k+1$. The graph of $\gamma$ for a given value $\rho$ is sketched in Figure \ref{gamma_P}.
	\begin{figure}[H]
	\begin{center} \includegraphics[scale=0.9]{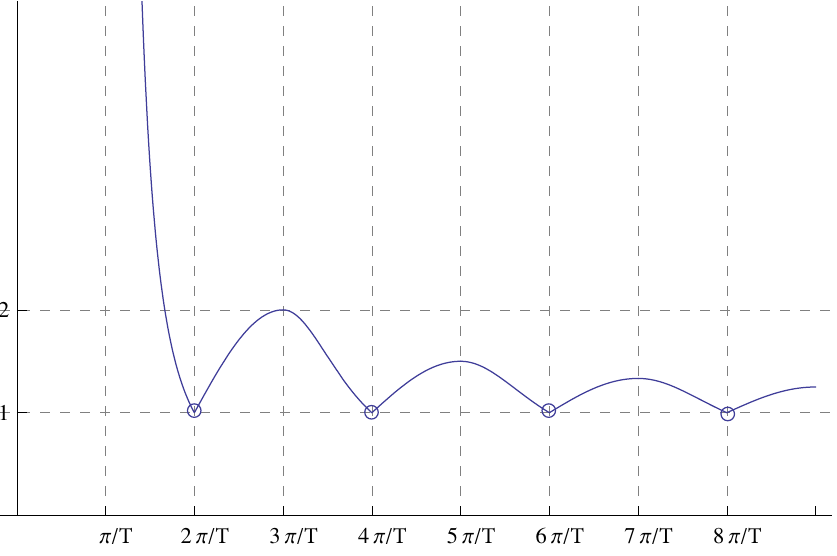}
	\caption{Graph of $\gamma$ for the periodic problem.}\label{gamma_P}
	\end{center}
	\end{figure}

	\section{Dirichlet boundary value problem with constant potential}
	\label{sect-Dir-constant}
	Let us now try to prove some analogue results for  Dirichlet boundary conditions. In this case, the eigenvalues for the Dirichlet problem 
			\[
			u''(t)+\lambda\, u(t)=0, \; \text{ for } t\in(0,T), \quad u(0)=u(T)=0,
		\]
	are $\lambda_n=(n\pi/T)^2$ for $n=1,2,3\cdots$ and it follows easily that the eigenfunctions associated to  $\lambda_D\equiv \lambda_1=(\pi/T)^2$ are the multiples of the function $v_D(t)=\sin (\frac{\pi t}{T})$.
	
	It is very well known that the associated Green's function is strictly negative if and only if $\lambda<\lambda_1=(\pi/T)^2$, and it changes sign for any nonresonant value of $\lambda>(\pi/T)^2$.
	
	 Considering  $\lambda=\rho^2$
	 for $\rho\neq \frac{n\pi}{T}$, with $n\in\N$, we have $\int_0^T G_D(t,s)\sin(\frac{\pi s}{T})\,ds>0$ for $t\in(0,T)$, and we define 
		\[
			\gamma(\rho)=\inf_{t\in(0,T)}\gamma(t,\rho)=
			\inf_{t\in(0,T)}
			\frac{\int_0^TG_D^+(t,s)\sin(\frac{\pi s}{T})\,ds}{\int_0^T G_D
			^-(t,s)\sin(\frac{\pi s}{T})\,ds}.
		\]
	The explicit formula for the Green's function in the nonresonant cases is given by (see \cite{cabada2})
		\begin{equation}
		\label{e-G-Dir}
		G_D(t,s)=\begin{cases}
			G_1(t,s)=-\frac{\sin (\rho s)\,\sin \rho(T-t)}{\rho\sin (\rho\,T)},&
			0\le s\le t\le T, \\[.2cm]
			G_2(t,s)=-\frac{\sin (\rho t)\,\sin \rho(T-s)}{\rho\sin (\rho\,T)},&
			0\le t\le s\le T \,.
		\end{cases}
		\end{equation}
		
		We will consider two cases:
		
		\begin{enumerate}
			\item[\underline{Case 1:} \hspace*{-1cm}] \hspace*{0.8cm} $\frac{(2n-1)\pi}{T}<\rho<\frac{2n\pi}{T}$ for $n\in \N$;
			\item[\underline{Case 2:} \hspace*{-1cm}] \hspace*{0.8cm} $\frac{2n\pi}{T}<\rho<\frac{(2n+1)\pi}{T}$ for $n\in \N$.
		\end{enumerate}
	In case~1 the function $\gamma(t,\rho)$ has a different computation in each of the $4n-1$ intervals
	\begin{equation*}\begin{split}
		\left]0,T-\frac{(2n-1)\pi}{\rho\,T}\right], \, &
		\left[T-\frac{(2n-1)\pi}{\rho\,T},\frac{\pi}{\rho\,T}\right], \, \left[\frac{\pi}{\rho\,T},T-\frac{(2n-2)\pi}{\rho\,T}\right], \, \left[T-\frac{(2n-2)\pi}{\rho\,T},\frac{2\,\pi}{\rho\,T}\right],\\ \cdots &
		\left[\frac{(2n-2)\pi}{\rho\,T},T-\frac{\pi}{\rho\,T}\right], \,\left[T-\frac{\pi}{\rho\,T},\frac{(2n-1)\pi}{\rho\,T}\right],\,
		\left[\frac{(2n-1)\pi}{\rho\,T},T\right[
	\end{split}	\end{equation*}

		and in case~2, it has a different computation in each of the $4n+1$ intervals
		\[
			\left]0,T-\frac{2n\pi}{\rho\,T}\right],\,
			\left[T-\frac{2n\pi}{\rho\,T},\frac{\pi}{\rho\,T}\right], \cdots,
			\left[T-\frac{\pi}{\rho\,T},\frac{2n\pi}{\rho\,T}\right],\,
			\left[\frac{2n\pi}{\rho\,T},T\right[\,.
		\]
	In both cases, given a fixed $\rho$ it is easy to calculate the value of $\gamma(t,\rho)$. However the general expression for an arbitrary $\rho$ requires very long computations which are not fundamental for the purpose of this paper. Because of this, we are going to calculate the general expression of $\gamma(\rho)$ only for the first intervals of $\rho$, in particular for $\rho<\frac{6\,\pi}{T}$.
	
	For $\rho<\frac{6\,\pi}{T}$, we can see that the infimum is attained at $t=0$, so we will restrain our analysis to the first interval of $t$ in both cases in order to obtain the exact expression of $\gamma(\rho)$ for $\rho<\frac{6\,\pi}{T}$. 
	
	In case~1 we have
		\[
			\int_0^T G_D^+(t,s)\sin\left(\frac{\pi s}{T}\right)\,ds=
			\int_{T-\frac{\pi}{\rho}}^T G_2(t,s)\sin\left(\frac{\pi s}{T}\right)\,ds+
			\sum_{i=2}^n\int_{T-\frac{(2i-1)\pi}{\rho\,T}}^{T-\frac{(2i-2)\pi}{\rho\,T}}G_2(t,s)\sin\left(\frac{\pi s}{T}\right)\,ds
		\]
	and
		\begin{align*}
			-\int_0^T G_D^-(t,s)\sin\left(\frac{\pi s}{T}\right)\,ds& =\int_0^t G_1(t,s)\sin\left(\frac{\pi s}{T}\right)\,ds +\int_t^{T-\frac{(2n-1)\pi}{\rho\,T}} G_2(t,s)\sin\left(\frac{\pi s}{T}\right)\,ds\\
			& +
			\sum_{i=1}^{n-1} \int_{T-\frac{2i\pi}{\rho\,T}}^{T-\frac{(2i-1)\pi}{\rho\,T}} G_2(t,s)\sin\left(\frac{\pi s}{T}\right)\,ds=\frac{\sin\left(\frac{\pi t}{T}\right)}{\rho^2-\left(\frac{\pi}{T}\right)^2}-
			\int_0^T G_D^+(t,s)\sin\left(\frac{\pi s}{T}\right)\,ds
		\end{align*}
	so 
		\[
			\gamma(t,\rho)=\frac{\displaystyle\int_{T-\frac{\pi}{\rho\,T}}^T G_2(t,s)\sin\left(\frac{\pi s}{T}\right)\,ds+
			\displaystyle\sum_{i=2}^n \int_{T-\frac{(2i-1)\pi}{\rho\,T}}^{T-\frac{(2i-2)\pi}{\rho\,T}} G_2(t,s) \sin\left(\frac{\pi s}{T}\right)\,ds}
			{\displaystyle\int_{T-\frac{\pi}{\rho\,T}}^T G_2(t,s)\sin\left(\frac{\pi s}{T}\right)\,ds+
			\displaystyle\sum_{i=2}^n\int_{T-\frac{(2i-1)\pi}{\rho\,T}}^{T-\frac{(2i-2)\pi}{\rho\,T}} G_2(t,s)\sin\left(\frac{\pi s}{T}\right)\,ds -\frac{\sin\left(\frac{\pi\,t}{T}\right)}{\rho^2-\left(\frac{\pi}{T}\right)^2}
			}.
	\]
	Doing a similar study for case~2 we get
		\[
			\gamma(t,\rho)=\frac{
			\displaystyle\sum_{i=1}^n\int_{T-\frac{2i\pi}{\rho\,T}}^{T-\frac{(2i-1)\pi}{\rho\,T}}G_2(t,s)\sin\left(\frac{\pi s}{T}\right)\,ds}
			{\displaystyle\sum_{i=1}^n\int_{T-\frac{2i\pi}{\rho\,T}}^{T-\frac{(2i-1)\pi}{\rho\,T}}G_2(t,s)\sin\left(\frac{\pi s}{T}\right)\,ds -\frac{\sin\left(\frac{\pi\,t}{T}\right)}{\rho^2-\left(\frac{\pi}{T}\right)^2}
			}.
	\]
	
	Using the previous expressions it is immediate to calculate $\gamma(t,\rho)$ for any fixed value of $\rho$ and $T$. For instance, computing $\gamma(t,\rho)$ for $T=1$ we obtain:
	\begin{itemize}
		\item If $\rho \in \left(\pi,2\,\pi\right)$, then $\gamma(t,\rho)=\frac{\sin{\rho\,t}\,\sin\frac{\pi^2}{\rho}} {\sin{\rho\,t} \,\sin\frac{\pi^2}{\rho} \,+\, \sin{\rho} \, \sin{\pi\,t}}$.
		\item If $\rho \in \left(2\,\pi,3\,\pi\right)$, then $\gamma(t,\rho)=\frac{\sin\rho\,t\left(\sin\frac{\pi^2}{\rho} \, + \, \sin\frac{2\,\pi^2}{\rho}\right)} {\sin\rho\,t\left(\sin\frac{\pi^2}{\rho} \, + \, \sin\frac{2\,\pi^2}{\rho}\right)  - \, \sin\rho\,\sin\pi\,t}.$
		\item If $\rho \in \left(3\,\pi,4\,\pi\right)$, then $\gamma(t,\rho)=\frac{\sin\rho\,t\, \left(\sin\frac{\pi^2}{\rho} \, + \, \sin\frac{2\,\pi^2}{\rho} \, + \, \sin\frac{3\,\pi^2}{\rho}\right) } {\sin\rho\,t\left(\sin\frac{\pi^2}{\rho} \, + \, \sin\frac{2\,\pi^2}{\rho} \, + \, \sin\frac{3\,\pi^2}{\rho}\right)  + \, \sin\rho\,\sin\pi\,t}.$
		\item If $\rho \in \left(4\,\pi,5\,\pi\right)$, then $\gamma(t,\rho)=\frac{\sin\rho\,t\,\left(\sin\frac{\pi^2}{\rho} \, + \, \sin\frac{2\,\pi^2}{\rho} \, + \, \sin\frac{3\,\pi^2}{\rho} \,+ \, \sin\frac{4\,\pi^2}{\rho} \right)}{\sin\rho\,t\,\left(\sin\frac{\pi^2}{\rho} \, + \, \sin\frac{2\,\pi^2}{\rho} \, + \, \sin\frac{3\,\pi^2}{\rho} \,+ \, \sin\frac{4\,\pi^2}{\rho} \right)-\sin\rho \, \sin\pi\,t}.$
		\item If $\rho \in \left(5\,\pi,6\,\pi\right)$, then $\gamma(t,\rho) =\frac{\sin\rho\,t \, \left(\sin\frac{2\,\pi^2}{\rho} \, + \, \sin\frac{3\,\pi^2}{\rho} \,+ \, \sin\frac{4\,\pi^2}{\rho} \,+ \, \sin\frac{5\,\pi^2}{\rho}\right) \,+\, 2\left(1\,-\,\frac{\pi^2}{\rho^2}\right) \sin\rho\,t } {\sin\rho\,t\,\left(\sin\frac{2\,\pi^2}{\rho} \, + \, \sin\frac{3\,\pi^2}{\rho} \,+ \, \sin\frac{4\,\pi^2}{\rho} \,+ \, \sin\frac{5\,\pi^2}{\rho}\right)  \,+\, \sin\rho\,\sin\pi\,t \,+\, 2\left(1\,-\,\frac{\pi^2}{\rho^2}\right)\sin\rho\,t}$.
	\end{itemize}
	
	In Figure \ref{gamma_t} we have a sketch of the function $\gamma(t,10.8)$ for $T=1$.
	\begin{figure}[H]
		\begin{center} \includegraphics[scale=0.7]{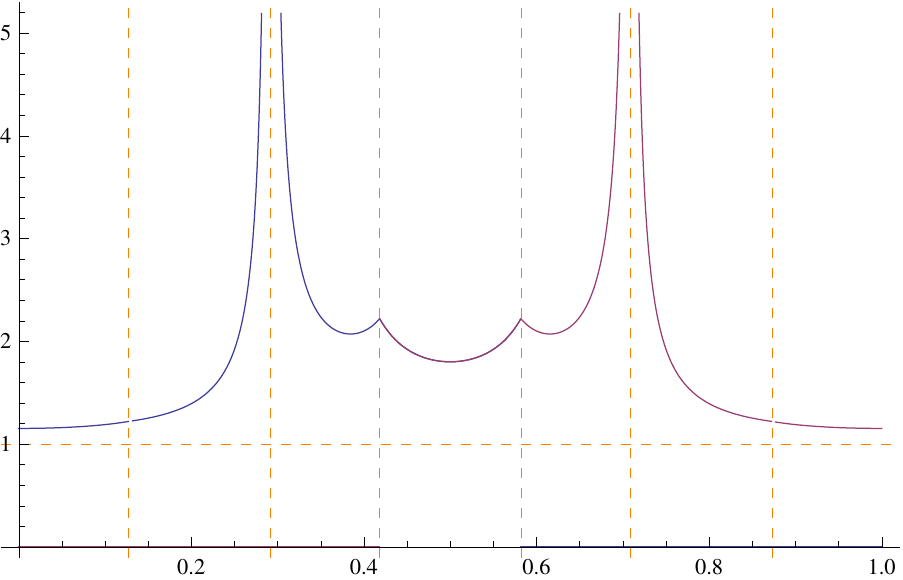}
		\caption{Graph of $\gamma(t,10.8)$ for Dirichlet problem.}\label{gamma_t}
		\end{center}
	\end{figure}

	Computing the limit 
		\[
			\gamma(\rho)=\lim_{t\to 0}\gamma(t,\rho),
		\]
	we get the following expressions for $\gamma(\rho)$:
	\begin{itemize}
		\item If $\rho \in \left(\pi,2\,\pi\right)$, then $\gamma(\rho)=1-\frac{\pi\,\sin{\rho}}{\pi\,\sin{\rho} \,+\, \rho \,\sin\frac{\pi^2}{\rho}}$.
		\item If $\rho \in \left(2\,\pi,3\,\pi,\right)$, then $\gamma(\rho)=1+\frac{\pi\, \sin\rho}{-\pi\,\sin\rho \,+\, \rho\left(\sin\frac{\pi^2}{\rho} \, + \, \sin\frac{2\,\pi^2}{\rho}\right)}.$
		\item If $\rho \in \left(3\,\pi,4\,\pi\right)$, then $\gamma(\rho)=1-\frac{\pi\, \sin\rho}{\pi\,\sin\rho \,+\, \rho\left(\sin\frac{\pi^2}{\rho} \, + \, \sin\frac{2\,\pi^2}{\rho}\, + \, \sin\frac{3\,\pi^2}{\rho}\right)}.$
		\item If $\rho \in \left(4\,\pi,5\,\pi\right)$, then $\gamma(\rho)=1+\frac{\pi\, \sin\rho}{-\pi\,\sin\rho \,+\, \rho\left(\sin\frac{\pi^2}{\rho} \, + \, \sin\frac{2\,\pi^2}{\rho}\, + \, \sin\frac{3\,\pi^2}{\rho}\, + \, \sin\frac{4\,\pi^2}{\rho}\right)}.$
		\item If $\rho \in \left(5\,\pi,6\,\pi\right)$, then $\gamma(\rho)=1 -\frac{\pi\, \sin\rho}{\pi\,\sin\rho \,+\, \rho \left(\sin\frac{\pi^2}{\rho} \, + \, \sin\frac{2\,\pi^2}{\rho}\, + \, \sin\frac{3\,\pi^2}{\rho}\, + \, \sin\frac{4\,\pi^2}{\rho}\, + \, \sin\frac{5\,\pi^2}{\rho}\right) \,+\,2\frac{\rho^2-\pi^2}{\rho}}.$
	\end{itemize}	
	Graphically the function $\gamma(\rho)$ is represented in Figure \ref{gamma_D} for $T=1$.
	\begin{figure}[H]
	\begin{center} 
	\includegraphics[scale=0.7]{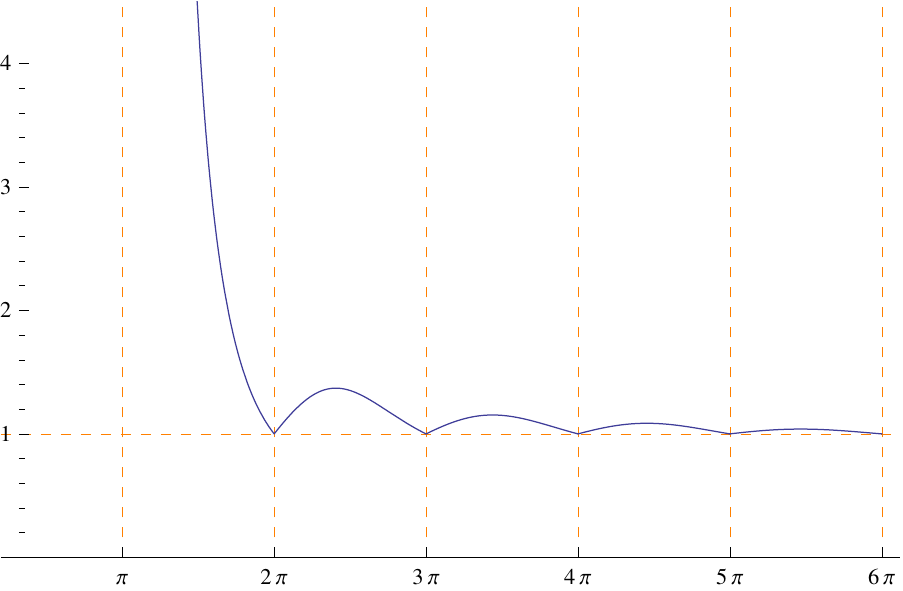}
	\caption{Graph of $\gamma$ for Dirichlet problem.}\label{gamma_D}	
	\end{center}
	\end{figure}

	Let us now see some examples.
	
	\begin{example}
	The Dirichlet BVP
	\begin{equation}\label{ex-D-1}
		u''(t)+60 u(t)=t(1-t) , \text{ for } t\in(0,1)\qquad
			u(0)=u(1)=0 
	\end{equation}
	has a positive solution, since $\gamma(\sqrt{60})\approx 1.36>\frac{4}{3}$
	and $\frac{3\sin(\pi t)}{4\pi}\le t(1-t)\le \frac{\sin(\pi t)}{\pi}$,
	but the solution of the Dirichlet BVP
	\begin{equation}\label{ex-D-2}
		u''(t)+60 u(t)=t , \text{ for } t\in(0,1)\qquad
			u(0)=u(1)=0 
	\end{equation}
	changes sign. We can see the respective solutions in Figures \ref{fig-ex-D-1} and \ref{fig-ex-D-2}.
	
	\begin{multicols}{2}
	\begin{center}
			\begin{figure}[H]
					\begin{center}
		 \includegraphics[scale=0.6]{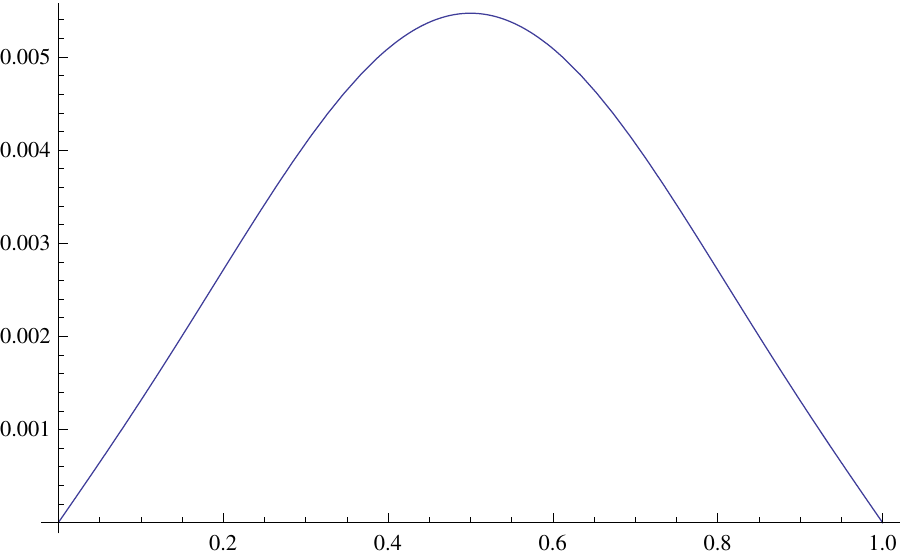}\hspace{1cm}
		\caption{Solution of problem \eqref{ex-D-1}} \label{fig-ex-D-1}
		  \end{center}
	\end{figure}
\columnbreak
			\begin{figure}[H]
		\includegraphics[scale=0.6]{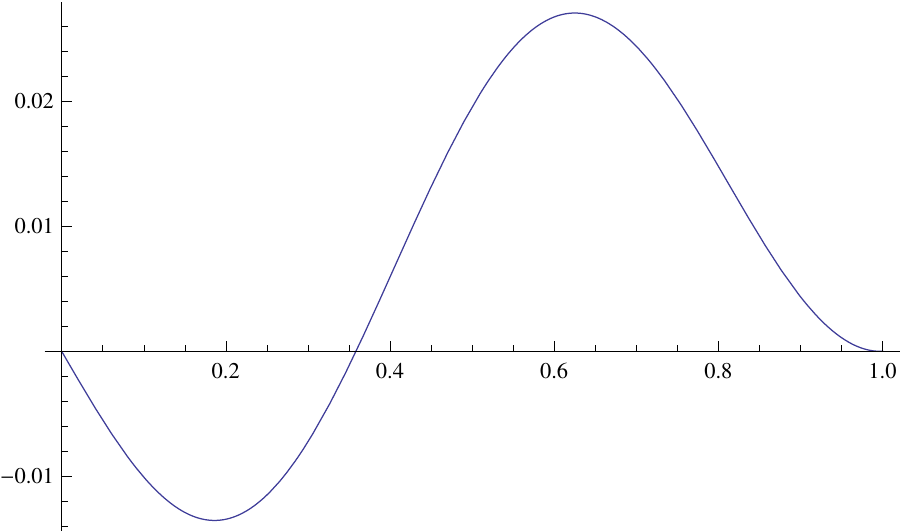}
		\caption{Solution of problem \eqref{ex-D-2}} \label{fig-ex-D-2}
	\end{figure}
  \end{center}
\end{multicols}
\end{example}

\begin{remark}
Analogous arguments and calculations can be done for the Neumann and Mixed problems. 
\end{remark}

\end{document}